\DeclarePairedDelimiter{\ceil}{\lceil}{\rceil} %Ceiling operator
\newtheorem{thm}{Theorem}[section]
\newtheorem{lem}[thm]{Lemma}
\newtheorem{defi}[thm]{Definition}
\newtheorem{rem}[thm]{Remark}
\newtheorem{assumption}[thm]{Assumption}
\newtheorem{ex}[thm]{Example}
\newcommand{\ga}{\alpha}
\newcommand{\gb}{\beta}
\newcommand{\gd}{\delta}
\newcommand{\eps}{\ensuremath{\varepsilon}}
\renewcommand{\gg}{\gamma}
\newcommand{\gk}{\kappa}
\newcommand{\gl}{\lambda}
\newcommand{\go}{\omega}
\newcommand{\gs}{\sigma}
\newcommand{\gt}{\theta}
\newcommand{\gD}{\Delta}
\newcommand{\gG}{\Gamma}
\newcommand{\gO}{\Omega}
\newcommand{\cA}{\mathcal{A}}
\newcommand{\cB}{\mathcal{B}}
\newcommand{\cD}{\mathcal{D}}
\newcommand{\cF}{\mathcal{F}}
\newcommand{\cI}{\mathcal{I}}
\newcommand{\cK}{\mathcal{K}}
\newcommand{\cL}{\mathcal{L}}
\newcommand{\cO}{\mathcal{O}}
\newcommand{\cP}{\mathcal{P}}
\newcommand{\cU}{\mathcal{U}}
\newcommand{\cX}{\mathcal{X}}
\newcommand{\cY}{\mathcal{Y}}
\newcommand{\bE}{\mathbb{E}}
\newcommand{\bN}{\mathbb{N}}
\newcommand{\bP}{\mathbb{P}}
\newcommand{\bR}{\mathbb{R}}
\newcommand{\bT}{\mathbb{T}}
\newcommand{\1}{\mathbf{1}} %Indicator function
\newcommand{\LHS}{\cL_{\textit{HS}}}
\newcommand*{\lrscript}[5]{{\vphantom{#1}}_{#2}^{#3}{#1}_{#4}^{#5}}
\newcommand{\dualpair}[4]{\ensuremath{\lrscript{\langle}{#1}{}{}{} #3 ,#4 \rangle_{#2}}}
\newcommand{\be}{\begin {equation}}
\newcommand{\ee}{\end  {equation}}
\newcommand{\bee}{\begin {equation*}}
\newcommand{\eee}{\end {equation*}}
\newcommand{\ol}{\overline}
\newcommand{\floor}[1]{\lfloor #1 \rfloor}
\newcommand{\dgj}[1]{[\![ #1 ]\!]} %DG jump
\newcommand{\dga}[1]{\{\!\{ #1 \}\!\}} %DG average
\newcommand{\abs}[1]{\left\vert#1\right\vert}
\newcommand{\KL}{{Karhunen-Lo\`{e}ve }}
\let\inf\relax \DeclareMathOperator*\inf{\vphantom{p}inf}
\begin{document}
	
	\title{Stochastic Transport with L\'evy Noise \\ [1ex]
		\Large{Fully Discrete Numerical Approximation} }
	\author{ 
		Andreas Stein 
		\footnote{Seminar for Applied Mathematics, ETH Zürich. Email: \href{mailto:andreas.stein@sam.math.ethz.ch}{andreas.stein@sam.math.ethz.ch}
		}
		\and
		Andrea Barth 
		\footnote{Institute of Applied Analysis and Numerical Simulation/SimTech, University of Stuttgart. Email: \href{mailto:andrea.barth@mathematik.uni-stuttgart.de}{andrea.barth@mathematik.uni-stuttgart.de}
		}
	}
	
	\date{\today}

\maketitle

\begin{abstract}
Semilinear hyperbolic stochastic partial differential equations (SPDEs) find widespread applications in the natural and engineering sciences. However, the traditional Gaussian setting may prove too restrictive, as phenomena in mathematical finance, porous media, and pollution models often exhibit noise of a different nature. To capture temporal discontinuities and accommodate heavy-tailed distributions, Hilbert space-valued Lévy processes or Lévy fields are employed as driving noise terms. 
The numerical discretization of such SPDEs presents several challenges. The low regularity of the solution in space and time leads to slow convergence rates and instability in space/time discretization schemes. Furthermore, the Lévy process can take values in an infinite-dimensional Hilbert space, necessitating projections onto finite-dimensional subspaces at each discrete time point. Additionally, unbiased sampling from the resulting Lévy field may not be feasible.
In this study, we introduce a novel fully discrete approximation scheme that tackles these difficulties. Our main contribution is a discontinuous Galerkin scheme for spatial approximation, derived naturally from the weak formulation of the SPDE. We establish optimal convergence properties for this approach and combine it with a suitable time stepping scheme to prevent numerical oscillations. Furthermore, we approximate the driving noise process using truncated Karhunen-Loève expansions. This approximation yields a sum of scaled and uncorrelated one-dimensional Lévy processes, which can be simulated with controlled bias using Fourier inversion techniques.
\end{abstract}

\medskip

\noindent
{\bf Keywords:} Numerical Analysis of SPDEs -- Stochastic Transport Equation -- Infinite-dimensional L\'evy Processes -- Discontinuous Galerkin Method 

\medskip

\noindent
{\bf Mathematics Subject Classification (2020):} 60H15 -- 60H35 -- 35R60 --  60G51 -- 60J76 -- 65M15 -- 65M60

%%%%%%%%%%%%%%%%%%%%%%%%%%%%%%%%%%%%%%%%%%%%%%%%%
\section{Introduction}
%%%%%%%%%%%%%%%%%%%%%%%%%%%%%%%%%%%%%%%%%%%%%%%%%

In many applications in the natural sciences and financial mathematics partial differential equations (PDEs) are utilized to model dynamics of the underlying system. Often, the dynamical systems are subject to uncertainties for instance due to noisy data, measurement errors or parameter uncertainty. A common approach to capture this behavior is to model the source of uncertainty by continuous Gaussian processes, which are analytically tractable and  straightforward to simulate. It turns out, however, that Gaussian distributions notoriously underappreciate rare events, thus heavy-tailed, discontinuous L\'evy-processes are better suited, i.e., to model stock returns, interest rate dynamics and energy forward markets (\cite{AKW10,GS04}).
Furthermore, Gaussian random objects are unfit to capture the impact of spatial and temporal discontinuities, for example in flows through fractured porous media or composite materials, see e.g. \cite{ZK04}.
However, replacing Gaussian distributions by more general random objects comes at the cost of lower regularity (both, path-wise and in a mean-square sense) and advanced sampling techniques are required.

In this article we consider semilinear first order stochastic partial equations (SPDEs) with a random source term.  The noise is modeled by a space-time L\'evy process taking values in a suitable infinite-dimensional Hilbert space $U$. Existence and uniqueness of weak solutions to this type of equations is ensured but in general no closed formulas or distributional properties are available.
Thus, we need to rely on numerical discretization schemes to estimate moments or statistics of the solution. The numerical approximation of SPDEs has been an active field of research in the last decade.
Most publications focus on second order parabolic equations, i.e. stochastic versions of the heat or Allen-Cahn equation, see for instance \cite{BL12,BLS13,D09,GM05,GSS16,JP09,JKN11,KLL15,K14} and the references therein. In this setting, L\'evy fields as driving noise of the SPDE have been investigated, among others, in \cite{BL12b,BSt18,BZ10,DHP12,PZ07}.
Results on second order hyperbolic SPDEs may be found, e.g., in \cite{ACLW16,CQS16,KLS15,PZ07,W06} and the references therein, nonlinear hyperbolic SPDEs are the subject of interest, for example, in \cite{BJ13,KR12}.

To model the dynamics in financial markets, however, it is more common to consider first order linear hyperbolic SPDEs, for example in the \textit{Heath-Jarrow-Morton model} with \textit{Musiela parametrization} for interest rate forwards, see \cite{BM94,CT07,HJM92}. Another example may be found in \cite{BB14,BK08}, where the authors motivate a stochastic framework to model energy forward markets perturbed by infinite-dimensional noise.
The underlying SPDE is a semilinear hyperbolic transport problem, where the nonlinearity stems from a no-arbitrage condition and directly depends on the volatility in the market, represented by the integrand for the infinite-dimensional noise process.
Naturally, the numerical treatment is then more involved than in the parabolic case, as we face lower regularity of the solution and the transport semigroup is not analytic. Consequently, there is very little literature on the numerical analysis of stochastic transport problems as for example \cite{B10,KT14}. More recently, in~\cite{benth2022heat} the authors derived error estimates for a finite difference approximation of stochastic transport on a one-dimensional spatial domain.

Our contribution is a rigorous regularity analysis and a fully discrete approximation scheme for a stochastic transport equation on a one- and two-dimensional spatial domain, and driven by a trace class L\'evy noise $L$. We derive mean-square temporal continuity and spatial regularity of the solution in terms of fractional Sobolev norms under mild assumptions. The degree of spatial smoothness depends on the regularity of $L$ and is made explicit and outlined in detail for the important special case that $L$ is associated to a Mat\'ern covariance function (see Examples~\ref{ex:matern} and ~\ref{ex:matern2} below). Furthermore, we consider the transport problem on a bounded domain with suitable inflow boundary conditions rather than on $\bR^d$. This is of practical interest in terms of modeling and simulation, but the boundary naturally limits spatial regularity of the solution even for smooth noise and initial conditions.

To approximate the solution, we couple a stable time stepping scheme with a discontinuous Galerkin (DG) approach for the spatial domain, exploiting the weak formulation of the SPDE. 
By imposing certain flow conditions on the mesh as in \cite{CDG08} (see Assumption~\ref{ass:flow} in Section~\ref{sec:dg} for details), we guarantee optimal spatial convergence. 
This method has been proven to be successful for deterministic hyperbolic problems, but, to the best of our knowledge, has not been applied for the discretization of SPDEs.
Finally, to sample the paths of $L$ and to obtain a fully discrete scheme, we combine truncated \KL expansions with an arbitrary approximation algorithm for the one-dimensional marginal L\'evy processes. In each step we provide bounds on the strong mean-squared error and give an estimate of the overall error between the unbiased solution and its fully discrete numerical approximation.

In Section~\ref{sec:prelim}, we present a comprehensive introduction to SPDEs with Lévy noise, establishing the existence and uniqueness of mild/weak solutions in a general setting. Moving on to Section~\ref{sec:ste}, we introduce the stochastic transport problem associated with a first-order differential operator and outline the necessary assumptions for ensuring well-posedness. Subsequently, we demonstrate the spatial Sobolev regularity and mean-square temporal regularity of the solution in order to establish a rigorous error control framework for subsequent sections.
In Section~\ref{sec:dg}, we introduce a discontinuous Galerkin spatial discretization method, which is then combined with a backward Euler time stepping scheme in Section~\ref{sec:time}. By utilizing a DG mesh that respects the flow direction of the transport operator, we derive optimal convergence rates for this spatio-temporal discretization approach.
Section~\ref{sec:noise} focuses on the sampling procedure of the infinite-dimensional driving noise. We provide an overall mean-squared error analysis that encompasses temporal, spatial, and noise approximation components. Finally, in Section~\ref{sec:numerics}, we present a numerical example that serves to support our theoretical findings.

%%%%%%%%%%%%%%%%%%%%%%%%%%%%%%%%%%%%%%%%%%%%%%%%%
\section{Stochastic Partial Differential Equations with L\'evy Noise}\label{sec:prelim}
%%%%%%%%%%%%%%%%%%%%%%%%%%%%%%%%%%%%%%%%%%%%%%%%%

Let $(\gO,\cF,\bP, (\cF_t,t\ge0))$ be a filtered probability space satisfying the usual conditions and let $\bT=[0,T]$ be a finite time interval.
Furthermore, let $(U,(\cdot,\cdot)_U)$ and $(H,(\cdot,\cdot)_H)$ be separable Hilbert spaces and let $\cL(U, H)$ and $\cL(H)$ denote the set of linear bounded operators $O: U\to  H$ and $O: H\to  H$, respectively.
The space of \textit{Hilbert-Schmidt operators} on $U$ is given by
$$\LHS(U,H):=\left\{O\in \cL( U,  H)\big|\; \|O\|^2_{\LHS(U; H)}:=\sum_{k\in\bN} \|O u_k\|^2_{ H}<+\infty \right\},$$
where $(u_k,k\in\bN)$ is an arbitrary orthonormal basis of $U$.
The Lebesgue-Bochner space of all square-integrable, $ H$-valued random variables is defined as
\bee
L^2(\gO; H):=\left\{Y:\gO\to H \big|\,\text{$Y$ is strongly measurable with $\|Y\|_{L^2(\gO; H)}:=\bE(\|Y\|_{ H}^2)^{1/2}<+\infty$}\right\}.
\eee
For the remainder of this article, we omit the stochastic argument $\go\in\gO$ for notational convenience.
Solutions to the SPDEs are characterized by path-wise identities that hold almost surely, see Definition~\ref{def:solutions} below. Therefore, unless stated otherwise, all appearing equalities and estimates involving stochastic terms are in the path-wise sense and are assumed to hold almost surely.
We denote by $C$ a generic positive constant which may change from one line to another.
Whenever necessary, the dependency of $C$ on certain parameters is made explicit.
Our focus is on stochastic partial differential equations with L\'evy noise, i.e. with a possibly infinite-dimensional, square-integrable L\'evy process as driving noise.
\begin{defi}
	A $U$-valued stochastic process $L=(L(t),t\in\bT)$ is called \textit{L\'evy process} if
	\begin{itemize}
		\item $L$ has stationary and independent increments,
		\item $L(0)=0$ almost surely, and
		\item $L$ is stochastically continuous, i.e., for all $\eps>0$ and $t\in\bT$ holds
		\begin{equation*}
			\lim\limits_{\substack{s\to t,\\ s\in\bT}} \bP(\|L(t)-L(s)\|_U>\eps)=0.
		\end{equation*}
	\end{itemize}
	$L$ is called \textit{square-integrable} if\, $\bE(\|L(t)\|_U^2)<+\infty$ holds for any $t\in\bT$.
\end{defi}
We consider the SPDE
\be\label{eq:spde}
dX(t)=(AX(t)+F(t,X(t)))dt+G(t,X(t))dL(t), \quad X(0)=X_0,
\ee
on $\bT$, where $X_0$ is a $H$-valued random variable and $A: D(A)\subset H\to H$ is an unbounded, densely defined linear operator generating a $C_0$-semigroup $S=(S(t),t\ge0)\subset \cL(H)$ on $H$.
The coefficients $F$ and $G$ in Eq.~\eqref{eq:spde} are possibly non-linear measurable mappings  $F:\bT\times H\to H$ and $G:\bT\times H\to \LHS(\cU,H)$, respectively.
The driving noise is modeled by a square-integrable, $U$-valued L\'evy process $L$ defined on $(\gO,\cF,\bP, (\cF_t,t\ge0))$ with non-negative, symmetric and trace class covariance operator $Q\in \cL(U)$, satisfying the identity
\bee
\bE( (L(t)-\bE(L(t)),\phi)_U (L(t)-\bE(L(t)),\psi)_U )=t(Q\phi,\psi)_U,\quad \phi,\psi\in U,\;t\in\bT.
\eee
By the Hilbert-Schmidt theorem, the ordered eigenvalues $\eta_1\ge\eta_2\ge\dots\ge0$ of $Q$ are non-negative and have zero as their only accumulation point.
The corresponding eigenfunctions $(e_k,k\in\bN)\subset U$ form an orthonormal basis of $U$ and we define the \textit{square-root} of $Q$ via
\bee
Q^{1/2}\phi:=\sum_{k\in\bN}\sqrt{\eta_k}(\phi,e_k)_Ue_k,\quad\phi\in U.
\eee
Since $Q^{1/2}$ is not necessarily injective, the \textit{pseudo-inverse} of $Q^{1/2}$ is given by
\bee
Q^{-1/2}\varphi:=\phi,\quad\text{if $Q^{1/2}\phi=\varphi$\; and $\|\phi\|_U=\inf_{\varphi\in U,\,Q^{1/2}\varphi=\phi}\{\|\varphi\|_U\}$.}
\eee
With this, we are able to define the \textit{reproducing kernel Hilbert space} associated to $L$.

\begin{defi}
	Let $L$ be a square-integrable, $U$-valued L\'evy process with non-negative, symmetric, trace class covariance operator $Q\in \cL(U)$. Then, the set $\cU:=Q^{1/2}(U)$ equipped with the scalar-product
	\bee
	(\varphi_1,\varphi_2)_\cU:=(Q^{-1/2}\varphi_1,Q^{-1/2}\varphi_2)_U,\quad \varphi_1,\varphi_2\in\cU,
	\eee
	is called the \textit{reproducing kernel Hilbert space} (RKHS) of $L$.
\end{defi}

Note that $(\sqrt{\eta_k} e_k,k\in\bN)$ forms an orthonormal system in the RKHS $\cU$ and hence the norm on the space of Hilbert-Schmidt operators $\LHS(\cU,H)$ is given by
\bee
\|O\|^2_{\LHS(\cU,H)}=\sum_{k\in\bN}\eta_k \|Oe_k\|_H^2,\quad O\in \LHS(\cU,H).
\eee

\begin{ex}\label{ex:matern}
	An important special case is $U=L^2(\cD)$, where $\cD\subset\bR^d$ is an open and bounded spatial domain for $d\in\bN$ and $Q$ is the \textit{Mat\'ern covariance operator} with parameters $\nu,\rho>0$, given by
	\be\label{eq:matern}
	[Q\phi](x):=\int_\cD \frac{2^{1-\nu}}{\Gamma(\nu)}\left(\frac{\sqrt{2\nu}\|x-y\|}{\rho}\right)^\nu K_\nu\left(\frac{\sqrt{2\nu}\|x-y\|}{\rho}\right)\phi(y)dy,\quad \phi\in U,\; x\in\cD.
	\ee
	Above, $\gG$ is the Gamma function, $K_\nu$ is the modified Bessel function of the second kind with $\nu$ degrees of freedom, and $\left\|\cdot\right\|$ is an arbitrary norm on $\bR^d$, for instance the Euclidean norm. We refer to $\rho>0$ as the \textit{correlation length} of $Q$, while $\nu>0$ controls the spatial regularity of the paths generated by $Q$. More precisely, it holds that $L(t)(\cdot)\in C^{\ceil\nu -1}(\ol \cD)$ almost surely for each $t\in\bT$.
\end{ex}

Solutions of Problem~\eqref{eq:spde} are characterized in~\cite[Chapter 9]{PZ07}:
\begin{defi} \label{def:solutions}
	The \textit{predictable $\gs$-algebra} $\cP_\bT$ is the smallest $\gs$-field on $\gO\times\bT$ containing all sets of the form $\cA\times (s,t]$, where $\cA\in\cF_s$ and $s,t\in\bT$ with $s<t$.
	A $H$-valued stochastic process $Y:\gO\times\bT\to H$ is called \textit{predictable} if it is a $\cP_\bT$-$\cB(H)$-measurable mapping.
	The set of all square-integrable, $H$-valued predictable processes is denoted by
	\bee
	\cX_{\bT}:=\left\{Y:\gO\times\bT\to H|\,\text{Y is predictable and $\sup_{t\in\bT} \bE(\|Y(t)\|_H^2)<+\infty$} \right\}.
	\eee
	
	A process $X\in\cX_\bT$ is called 
	\begin{itemize}
		\item \textit{mild solution} to Eq.~\eqref{eq:spde} if
		\be \label{eq:mild}
		X(t)=S(t)X_0+\int_0^tS(t-s)F(s,X(s))ds+\int_0^tS(t-s)G(s,X(s))dL(s)
		\ee
		holds almost surely for all $t\in\bT$. 
		
		\item \textit{weak solution} to Eq.~\eqref{eq:spde} if
		\bee
		(X(t),v)_H=(X_0,v)_H+\int_0^t(X(s),A^*v)_H+(F(s,X(s)),v)_Hds+\int_0^t(G(s,X(s))^*v,dL(s))_\cU
		\eee
		holds almost surely for all $v\in D(A^*)$ and $t\in\bT$, where $A^*:D(A^*)\to H,\, G(s,v)^*\in L(H,\cU)$ are the adjoint operators to $A:D(A)\to H$ and $G(s,v)\in \LHS(\cU,H)$, respectively.
	\end{itemize}
\end{defi}

In Eq.~\eqref{eq:mild} $S:\bT\to \cL(H)$ is the semigroup generated by $A$, thus $S(t)=e^{tA}$ and Eq.~\eqref{eq:mild} may be interpreted as a \textit{variation-of-constants} formula.
In the definition of weak solutions, we use the identification $\LHS(\cU,\bR)=\cU$.
Hence, the integrand $s\mapsto G^*(s,X(s))^*v$ is a $\LHS(\cU,\bR)$-valued process and we obtain
\bee
\int_0^t(G(s,X(s))^*v,dL(s))_\cU:=\int_0^t G(s,X(s))^*vdL(s)=\left(v,\int_0^t G(s,X(s))dL(s)\right)_H
\eee
for any $v\in D(A^*)$, see \cite[Chapter 9.3]{PZ07}.
Solutions to Problem~\eqref{eq:spde} are infinite-dimensional processes, i.e. $X:\gO\times\bT\times\cD\to\bR$, where $\cD\subset\bR^d$ for some $d\in\bN$.
Therefore, in general $H\subseteq U=L^2(\cD)$.
To ensure that mild resp. weak solutions to \eqref{eq:spde} are well-defined and unique, we fix the following set of assumptions.
\begin{assumption}
	~
	\begin{enumerate}[label=(\roman*)] \label{ass1}
		\item $L$ is a centered, square integrable, $U$-valued L\'evy process with trace class covariance operator $Q$.
		\item $X_0\in L^2(\gO;H)$ is a $\cF_0$-measurable random variable.
		\item $A: D(A)\subset H\to H$ is densely defined and generates a $C_0$-semigroup $S=(S(t), t\ge 0)\subset\cL(H)$.
		\item \label{item:Lipschitz1} The mappings $F(\cdot,v):\bT\to H$ and $G(\cdot,v):\bT\to \LHS(\cU,H)$ are measurable for each $v\in H$ and there is a constant $C>0$ such that for all $t\in\bT$ and $v,w\in H$
		\bee
		\|F(t,v)-F(t,w)\|_H+\|G(t,v)-G(t,w)\|_{\LHS(\cU,H)} \le C\|v-w\|_H.
		\eee
	\end{enumerate}
\end{assumption}

\begin{rem}\label{rem:ass}
	~
	\begin{itemize}
		\item We focus on mean-square type convergence results in this article and only consider square-integrable processes $L$.
		This enables us to use the It\^o isometry in Lemma~\ref{lem:isometry} for stochastic integrals with respect to Hilbert space-valued L\'evy processes.
		Details on non-square integrable martingales as integrator can be found in \cite[Section 8.8]{PZ07}.
		\item If $L$ is of non-zero mean, then $E(L(t))=t\phi$ for some mean function $\phi\in U$. Hence, we may assume without loss of generality that $\bE(L(t))=0$ and incorporate $\phi$ as part of the nonlinearity $F$, if desired.
		\item Under Assumption~\ref{ass1}, all integrals in Definition~\ref{def:solutions} are well-defined, see \cite[Remark 9.6]{PZ07}.
		
		\item The global Lipschitz condition~\ref{item:Lipschitz1} with respect to the second argument is sufficient for existence and uniqueness of mild solutions.
		In the literature (e.g.\cite{LR15, PZ07}), slightly weaker assumptions of the form
		\bee
		\|S(t)(F(s,v)-F(s,w))\|_{H}\le b_F(t,s)\|v-w\|_{H},\quad s,t\in(0,T]
		\eee
		for $b_F\in L^2(\bT\times\bT)$ are imposed.
		For the numerical analysis in the forthcoming chapters, however, we utilize the weak solution of the SPDE, and it is therefore advantageous to assume Lipschitz continuity of $F$ and $G$.
		We note that this condition on $F$ and $G$ implies the linear growth bound
		\begin{align*}
			\|F(t,v)\|_H+\|G(t,v)\|_{\LHS(\cU,H)}&\le C(1+\|v\|_H),\quad v\in H,\; t\in\bT.
		\end{align*}
	\end{itemize}
\end{rem}

\begin{thm}\label{thm:exis}
	Under Assumption~\ref{ass1}, there exist a unique mild and weak solution $X\in\cX_{\bT}$ to Problem~\eqref{eq:spde}.
	Both solutions coincide and there exists $C=C(\bT)>0$, independent of $X_0$, such that
	\bee
	\|X(t)\|_{L^2(\gO;H)}\le C(1+\|X_0\|_{L^2(\gO;H)}),\quad t\in\bT.
	\eee
\end{thm}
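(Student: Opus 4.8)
The plan is to construct the mild solution by a Banach fixed-point argument applied to the integral operator
\begin{equation*}
(\cK Y)(t):=S(t)X_0+\int_0^tS(t-s)F(s,Y(s))ds+\int_0^tS(t-s)G(s,Y(s))dL(s),\quad t\in\bT,
\end{equation*}
on the space $\cX_{\bT}$, and then to derive the a priori bound from the same estimates via Gronwall's inequality. First I would verify that $\cK$ is a well-defined map $\cX_{\bT}\to\cX_{\bT}$. The term $S(t)X_0$ lies in $L^2(\gO;H)$ because $X_0$ does and, by Assumption~\ref{ass1}(iii) together with the uniform boundedness of $C_0$-semigroups on compact intervals, $M_\bT:=\sup_{t\in\bT}\|S(t)\|_{\cL(H)}<\infty$. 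Predictability of the integrands $s\mapsto S(t-s)F(s,Y(s))$ and $s\mapsto S(t-s)G(s,Y(s))$ follows from Assumption~\ref{ass1}\ref{item:Lipschitz1} and the predictability of $Y$, and the linear growth bound from Remark~\ref{rem:ass} shows the latter is an admissible integrand for $L$, so the stochastic convolution is well-defined (cf. Remark~\ref{rem:ass} and \cite[Remark 9.6]{PZ07}); that $\cK Y$ is itself predictable follows from a stochastic Fubini argument as in \cite[Chapter 9]{PZ07}.

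For the quantitative estimates the key tool is the It\^o isometry of Lemma~\ref{lem:isometry}, which yields
\begin{equation*}
\bE\Big\|\int_0^tS(t-s)G(s,Y(s))dL(s)\Big\|_H^2=\int_0^t\bE\|S(t-s)G(s,Y(s))\|_{\LHS(\cU,H)}^2ds\le M_\bT^2\int_0^t\bE\|G(s,Y(s))\|_{\LHS(\cU,H)}^2ds;
\end{equation*}
combined with the Cauchy-Schwarz bound $\bE\|\int_0^tS(t-s)F(s,Y(s))ds\|_H^2\le TM_\bT^2\int_0^t\bE\|F(s,Y(s))\|_H^2ds$ and the linear growth of $F$ and $G$, this gives $\sup_{t\in\bT}\bE\|(\cK Y)(t)\|_H^2<\infty$, hence $\cK Y\in\cX_{\bT}$. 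Applying the same two estimates to $(\cK Y)(t)-(\cK Z)(t)$, now invoking the Lipschitz condition of Assumption~\ref{ass1}\ref{item:Lipschitz1} instead of linear growth, produces
\begin{equation*}
\bE\|(\cK Y)(t)-(\cK Z)(t)\|_H^2\le C_\bT\int_0^t\bE\|Y(s)-Z(s)\|_H^2ds,\quad t\in\bT,
\end{equation*}
with $C_\bT$ depending only on $T$, $M_\bT$ and the Lipschitz constant. Endowing $\cX_{\bT}$ with the equivalent weighted norm $\|Y\|_{\gl}^2:=\sup_{t\in\bT}e^{-\gl t}\bE\|Y(t)\|_H^2$ and choosing $\gl>C_\bT$ makes $\cK$ a strict contraction, so the Banach fixed-point theorem provides a unique $X\in\cX_{\bT}$ with $\cK X=X$, which is precisely the unique mild solution.

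Finally, I would identify $X$ with the unique weak solution and establish the bound. For the former, testing \eqref{eq:mild} against $v\in D(A^*)$, using $\frac{d}{dt}(S(t-s)w,v)_H=-(S(t-s)w,A^*v)_H$ and a stochastic Fubini theorem recovers the weak formulation of Definition~\ref{def:solutions}, while conversely any weak solution is shown to solve \eqref{eq:mild}; hence weak and mild solutions coincide and uniqueness transfers — this is the corresponding statement in \cite[Chapter 9]{PZ07}, to which I would largely refer. For the bound, applying the growth estimates above with $Y=X$ gives
\begin{equation*}
\bE\|X(t)\|_H^2\le C(1+\|X_0\|_{L^2(\gO;H)}^2)+C\int_0^t\bE\|X(s)\|_H^2ds,\quad t\in\bT,
\end{equation*}
with $C=C(\bT)$ independent of $X_0$, and Gronwall's inequality followed by taking square roots yields the claimed estimate. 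The main obstacle is the careful verification that the stochastic convolution defines an $\cX_{\bT}$-valued object and the correct application of Lemma~\ref{lem:isometry} — in particular that $S(t-\cdot)G(\cdot,Y(\cdot))$ is $\LHS(\cU,H)$-predictable with square-integrable norm; once this is settled, the contraction and Gronwall steps are routine.
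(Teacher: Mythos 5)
Your proposal is correct and follows essentially the same route as the paper: a Banach fixed-point argument on $\cX_\bT$ with an exponentially weighted norm (the paper sketches exactly this iteration and otherwise cites \cite[Theorem 9.29]{PZ07}), the weak--mild equivalence taken from \cite[Theorem 9.15]{PZ07}, and the a priori bound from the same linear-growth estimates. The additional details you supply (It\^o isometry, contraction constant, Gronwall step) are the standard ones implicit in that reference, so there is nothing to object to.
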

\begin{proof}
	Existence and uniqueness of a mild solution as in Eq.~\eqref{eq:mild} is proven in detail in \cite[Theorem 9.29]{PZ07}. We only sketch the main idea here for later reference.
	Let $\vartheta>0$ be arbitrary and define the norm
	\begin{equation*}
		\|Y\|_\vartheta:=\sup_{t\in\bT}e^{-\vartheta t}\bE(\|Y(t)\|_H^2)^{1/2},
		\quad
		Y\in \cX_\bT.
	\end{equation*}
	With this, $(\cX_\bT,\|\cdot\|_\vartheta)$ is a Banach space, and, using $X_0$ as initial value, a sequence of fixed-point iterations on $(\cX_\bT,\|\cdot\|_\vartheta)$ is given by
	\bee
	X_{n+1}(t)=\Psi(X_n)(t):= S(t)X_0 + \int_0^t S(t-s)F(s,X_n(s))ds + \int_0^t S(t-s)G(s,X_n(s))dL(s),\quad n\in\bN_0.
	\eee
	Under Assumption~\ref{ass1}, and by choosing $\gb>0$ large enough, $\Psi:\cX_\bT\to \cX_\bT$ is a contraction, so that existence and uniqueness of mild solutions follow by Banach's fixed-point theorem.
	The equivalence of weak and mild solutions is shown in \cite[Theorem 9.15]{PZ07}.
\end{proof}

We further record the following bound on $C_0$-semigroups for later reference. 

\begin{lem}\cite[Chapter 1.2]{P83}\label{lem:semigroup}
	Let $S=(S(t),t\ge0)$ be a $C_0$-semigroup with infinitesimal generator $A$ on a Banach space $(\cY,\|\cdot\|_\cY)$.
	Then, there are constants $C_1,C_2>0$ such that for all $\phi\in \cY$ and $t\ge0$
	\bee
	\|S(t)\phi\|_\cY\le C_1e^{C_2 t}\|\phi\|_\cY.
	\eee
\end{lem}

In the remainder of this article, we investigate the case where $A$ is a first order differential operator and Eq.~\eqref{eq:spde} is a (hyperbolic) transport equation with L\'evy noise.
The next section establishes temporal and spatial regularity of $X$ in this scenario to pave the way for a numerical analysis in Sections~\ref{sec:dg}-\ref{sec:noise}.

To conclude this section, we record infinite-dimensional versions of It\^o's formula and the It\^o isometry, that require some more notation as preparation.
Let $(f_i, i\in\bN)$ be an orthonormal basis of the (separable) Hilbert space $H$ and denote by $H\otimes H$ the corresponding tensor-product Hilbert space. 
Let $H\widehat\otimes H$ be the completion of $H\otimes H$ with respect to the Hilbert-Schmidt norm 
\begin{equation*}
	\|\phi\otimes\varphi\|_{HS}^2 := \sum_{i,j\in\bN} \|(\phi, f_i) f_i\|_H^2 \|(\varphi, f_j) f_j\|_H^2, 
	\quad \phi\otimes\varphi\in H\otimes H. 
\end{equation*}
For a given $H$-valued semimartingale $Y:\gO\times \bT\to H$, we consider a fixed decomposition $Y=M+N$, where $M:\gO\times \bT\to H$ is the local martingale part of $Y$ and the process $N:\gO\times \bT\to H$ has paths of bounded variation.  
For any real-valued semimartingale $Y^{(1)}:\gO\times\bT\to\bR$ we denote the quadratic variation process of $Y^{(1)}$ by $([Y^{(1)}, Y^{(1)}]_t, t\in\bT)$. The quadratic covariation process of the two real-valued semimartingales $Y^{(1)}, Y^{(2)}:\gO\times\bT\to\bR$ is defined by the polarization identity 
\begin{equation*}
	[Y^{(1)}, Y^{(2)}]_t:=\frac{1}{2}\left([Y^{(1)} + Y^{(2)}]_t-[Y^{(1)}]_t - [Y^{(2)}]_t\right),
	\quad t\in\bT.
\end{equation*}
With this at hand, the quadratic variation proces of a given $H$-valued (local) martingale $M$ is given by the $H\widehat \otimes H$-valued process
\begin{equation*}
	[M, M]_t:=\sum_{i,j\in\bN} f_i\otimes f_j\, [(M, f_i)_H, (M, f_j)_H]_t, \quad t\in\bT.
\end{equation*}

\begin{lem}
	(It\^o formula, \cite[Theorem D.2]{PZ07})\label{lem:ito-formula}
	Let $Y=M+N$ be a $H$-valued semimartingale and let $\Psi:H\to \bR$ be such that  there holds $D\Psi(\phi)\in H$ for all $\phi\in \cL(H, \bR)\simeq H$ and that the mapping 
	$$H\ni \phi\mapsto D^2\Psi(\phi)\in \cL_{HS}(H)\simeq \cL(H\widehat \otimes H, \bR)$$
	is uniformly continuous on any bounded subset of $H$. 
	
	Then, $\Psi(Y)$ is a local semimartingale and there holds $P$-a.s. for all $t\in\bT$ 
	\begin{equation}\label{eq:ito_formula}
		\begin{split}
			\Psi(Y(t)) &= \Psi(Y(0))
			+ \int_0^t \left(D\Psi(Y(s-)), dY(s) \right)_H
			+\frac{1}{2} \int_0^t D^2\Psi(Y(s-)) d[M,M]_s \\
			&\quad +\sum_{s\le t} \gD (\Psi(Y))(s) - \left( D\Psi(Y(s-)), \gD Y(s) \right)_H 
			 -\frac{1}{2}\sum_{s\le t} D^2\Psi(Y(s-)) (\gD Y(s)\otimes \gD Y(s)).
		\end{split}
	\end{equation}
	
\end{lem}

\begin{lem}(It\^o isometry, \cite[Corollary 8.17]{PZ07})\label{lem:isometry}
	Let $\gk:\gO\times\bT\to \LHS(\cU, H)$ be a predictable, square integrable process and let $L$ satisfy Assumption~\ref{ass1}(i).
	Then, $\gk$ is an admissible integrand for $L$, and for all $t\in\bT$ it holds that
	\bee
	\bE\left(\left\|\int_0^t  \gk(s)dL(s) \right\|_{H}^2\right)
	=\int_0^t\bE\left(\|\gk(s)\|^2_{\LHS(\cU, H)}\right)ds
	=\int_0^t\bE\left(\sum_{k\in\bN}\eta_k\|\gk(s)e_k\|^2_{H}\right)ds.
	\eee
\end{lem}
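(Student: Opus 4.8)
This is \cite[Corollary~8.17]{PZ07}; I outline the argument I would give. It has three stages: elementary integrands, an approximation step, and the identification of the admissible class. For a simple integrand $\gk(s)=\sum_{j=0}^{m-1}\Phi_j\1_{(t_j,t_{j+1}]}(s)$ with $0=t_0<\dots<t_m=T$ and each $\Phi_j\in\cL(U,\widehat H)$ being $\cF_{t_j}$-measurable with finitely many values, the integral is the finite sum $\int_0^t\gk(s)\,dL(s)=\sum_j\Phi_j\bigl(L(t_{j+1}\wedge t)-L(t_j\wedge t)\bigr)$; this is well posed precisely because each $\Phi_j$ is defined on all of $U$, where $L$ takes its values --- note $L(t)\notin\cU$ almost surely when $\dim\cU=\infty$. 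I would then expand $\bE\bigl(\|\int_0^t\gk\,dL\|_{\widehat H}^2\bigr)$: the off-diagonal terms $j\neq j'$ vanish after conditioning on $\cF_{t_{\max(j,j')}}$, since $L$ has independent increments and is centered, so the increment carrying the larger index has conditional mean zero while the remaining factor is measurable. For the diagonal terms I would condition on $\cF_{t_j}$, use independence of $L(t_{j+1})-L(t_j)$ from $\cF_{t_j}$, and evaluate $\bE\bigl(\|\varphi(L(t_{j+1})-L(t_j))\|_{\widehat H}^2\bigr)$ for a fixed $\varphi\in\cL(U,\widehat H)$ by expanding in an orthonormal basis $(f_l)$ of $\widehat H$ and applying the covariance identity $\bE\bigl((L(b)-L(a),\psi)_U^2\bigr)=(b-a)(Q\psi,\psi)_U$; this gives $(b-a)\|\varphi Q^{1/2}\|_{\LHS(U,\widehat H)}^2=(b-a)\sum_k\eta_k\|\varphi e_k\|_{\widehat H}^2=(b-a)\|\varphi\|_{\LHS(\cU,\widehat H)}^2$. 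Summing over $j$ gives the isometry for simple $\gk$, and the two right-hand sides of the statement coincide by the formula for the $\LHS(\cU,\widehat H)$-norm recorded above.

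Next I would extend the identity by density. The claim is that $\cL(U,\widehat H)$-valued simple predictable processes are dense, in the norm $\gk\mapsto\bE(\int_0^T\|\gk(s)\|_{\LHS(\cU,\widehat H)}^2\,ds)$, in the space of predictable processes for which this quantity is finite; the time-and-randomness part is the usual monotone-class reduction, while the trace-class assumption on $Q$ is precisely what allows a general Hilbert--Schmidt operator $\cU\to\widehat H$ to be approximated in the relevant norm by operators in $\cL(U,\widehat H)$. Given $\gk_n\to\gk$ in this norm, applying the simple-integrand isometry to the differences $\gk_n-\gk_{n'}$ shows that $(\int_0^t\gk_n\,dL)_n$ is a Cauchy sequence in $L^2(\gO;\widehat H)$; its limit is by definition $\int_0^t\gk\,dL$, and letting $n\to\infty$ in the isometry (both sides being continuous in the respective norms) yields the stated identity for $\gk$. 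Since the class of processes reached in this way is exactly the family of predictable, square-integrable $\LHS(\cU,\widehat H)$-valued processes, each such $\gk$ --- in particular the integrands in Definition~\ref{def:solutions} --- is admissible for $L$, which is the remaining claim.

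The main obstacle is the interplay between the space $U$ in which $L$ lives and the RKHS $\cU=Q^{1/2}(U)$ on which the integrand is Hilbert--Schmidt: because $L(t)\notin\cU$ almost surely in infinite dimensions, one cannot form $\Phi_jL$ naively, which forces the two-space bookkeeping above, and the density step --- which genuinely relies on $Q$ being trace class --- is the technical heart of the construction. The remaining ingredients, namely the cancellation of the cross terms and the diagonal computation, follow directly from stationarity and independence of the increments, centeredness, and the covariance identity for $L$.
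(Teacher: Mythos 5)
The paper gives no proof of this lemma at all --- it is imported verbatim from \cite[Corollary~8.17]{PZ07} --- and your outline correctly reconstructs the standard argument behind that citation: the isometry for simple, finitely valued $\cL(U,\widehat H)$-valued predictable integrands via centering, stationarity and independence of increments, followed by the density of such integrands (using the trace-class structure of $Q$ for the finite-rank operator approximation) in the space of predictable square-integrable $\LHS(\cU,\widehat H)$-valued processes. The only quibble is your parenthetical claim that $L(t)\notin\cU$ almost surely whenever $\dim\cU=\infty$, which holds in the Gaussian case but not for every square-integrable L\'evy process (e.g.\ suitably summable compound Poisson superpositions); since the remark is purely motivational, it does not affect the validity of the proof.
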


%%%%%%%%%%%%%%%%%%%%%%%%%%%%%%%%%%%%%%%%%%%%%%%%%
\section{The Stochastic Transport Equation} \label{sec:ste}
%%%%%%%%%%%%%%%%%%%%%%%%%%%%%%%%%%%%%%%%%%%%%%%%%

Let us regard Eq.~\eqref{eq:spde} with respect to a convex spatial domain $\cD\subset\bR^d$ with $d\in \{1,2\}$, i.e. the solution $X$ is a $H$-valued process with $H=L^2(\cD)$.
We denote for $k\in\bN$ the standard Sobolev space $H^k:=H^k(\cD)$ equipped with the norm, resp. seminorm
\begin{equation*}
	\|v\|_{H^k}:=\left(\sum_{|\ga|\le k}\int_\cD |D^\ga v(x)|^2dx\right)^{1/2},\quad |v|_{H^k}:=\left(\sum_{|\ga|= k}\int_\cD |D^\ga v(x)|^2dx\right)^{1/2},
\end{equation*}
where $D^\ga=\partial_{x_1}^{\ga_1}\dots\partial_{x_d}^{\ga_d}$ is the mixed partial weak derivative (in space) with respect to the multi-index $\ga\in\bN_0^d$ and $|\ga|:=\sum_{i=1}^d\ga_i$.
The fractional order Sobolev spaces $H^\gg$ are defined by
\begin{align*}
H^\gg:&=\{v\in H|\; \|v\|_{H^\gg}<\infty\},\quad \gg>0, \\
\|v\|^2_{H^\gg}&=\|v\|^2_{H^{\floor \gg}}+\sup_{|\ga|= \floor \gg}|D^\ga v|^2_{H^{(\gg-\floor \gg)}}
:=\|v\|^2_{H^{\floor \gg}}+\sup_{|\ga|= \floor \gg}\int_\cD\int_\cD\frac{|D^\ga v(x)-D^\ga v(y)|^2}{|x-y|^{d+2(\gg-\floor \gg)}}dxdy,
\end{align*}
where the last term
is the \emph{Gagliardo seminorm}, see, e.g., \cite{DGV12}.
Let $a\in\bR^d\setminus\{0\}$ be a fixed vector, and let $A=a\cdot\nabla$ in Eq.~\eqref{eq:spde} be the first order differential operator. This yields the \textit{stochastic transport problem}
\be\label{eq:ste}
dX(t)=(a\cdot\nabla X(t)+F(t,X(t)))dt+G(t,X(t))dL(t), \quad X(0)=X_0.
\ee
The inflow boundary of $\cD$ is given by
\bee
\partial \cD^+:=\{x\in\partial\cD: a\cdot\vv n(x)>0\},
\eee
where $\vv n$ is the exterior normal vector to $\partial\cD$. 
We further define 
\bee
\partial \cD^-:=\{x\in\partial\cD: a\cdot\vv n(x)<0\}
\quad\text{and}\quad
\partial \cD^0:=\{x\in\partial\cD: a\cdot\vv n(x)=0\},
\eee 
as the outflow and characteristic boundary, respectively.
We equip Eq.~\eqref{eq:ste} with homogeneous inflow boundary conditions $X(t)=0$ on $\partial\cD^+$ for all $t\in\bT$.

\begin{rem}\label{rem:BC}
	Homogeneous inflow boundary conditions are imposed for notational convenience, and are not restrictive in our setting.
	In our example in Section~\ref{sec:numerics}, we examine an energy forward model with nonzero, but constant inflow boundary condition $X(t)=c>0$ on $\partial\cD^+$ for all $t\in\bT$.
	To treat this case in our setting, let $A, X_0, F, G$ and $L$ be given, and let $X:\gO\times\bT\to H$ be a solution to Eq.~\eqref{eq:ste_weak}, but with inhomogeneous boundary conditions $X(t)=c\in\bR$ on $\partial\cD^+$.
	For any $c\in\bR$ we define $X^{hom}(t):=X(t)-c$, as well as the modified coefficients
	\bee
	F^{hom}(s,v):=F(s,v+c)\quad\text{and}\quad
	G^{hom}(s,v):=G(s,v+c).
	\eee
	Note that if $F$ and $G$ satisfy Assumption~\ref{ass1}\ref{item:Lipschitz1}, then $F^{hom}$ and $G^{hom}$ also satisfy Assumption~\ref{ass1}\ref{item:Lipschitz1}.
	It is then readily verified that $X^{hom}(t)=0$ on $\bT\times\partial\cD^+$ and for all $t\in[0,T]$ it holds
	\begin{align*}
		dX^{hom}(t)=(a\cdot\nabla X^{hom}(t)+F^{hom}(t,X^{hom}(t))dt+G^{hom}(t,X^{hom}(t))dL(t), \quad X^{hom}(0)=X_0-c.
	\end{align*}
\end{rem}

To derive a weak formulation of Eq.~\eqref{eq:ste} in $H=L^2(D)$, we follow the approach for deterministic transport problems from \cite[Section 2.2]{DHSW12}:
For any $v,w\in C(\ol\cD)\cap C^1(\cD)$ Green's identity yields
\bee
(Aw,v)_H=(w,A^*v)_H+\int_{\partial\cD^+}a\cdot\vv n wvdz+\int_{\partial\cD^-}a\cdot\vv n wvdz,
\eee
where $A^*= - a \cdot \nabla $ is the formal adjoint of $A$.
Now let
\bee
C^1_{\triangle}(\cD):=\{v\in C(\ol\cD)\cap C^1(\cD) \big|\;v|_{\partial\cD^\triangle}=0 \},
\quad \triangle\in\{+,-\}.
\eee
For any $v\in C^1_-(\cD)$ we define $\|v\|_V:=\|A^*v\|_H$, and note that $\|\cdot\|_V$ is a norm on $C^1_-(\cD)$, since $A^*$ is injective on $C^1_-(\cD)$ for $a\neq 0$ (\cite[Remark 2.2, case (i)]{DHSW12}).
Furthermore, let
\be\label{eq:V}
	V:=\textrm{clos}_{\|\cdot\|_V} C^1_-(\cD)\subset H,
\ee
where $\textrm{clos}_{\|\cdot\|}O$ signifies the closure of a set $O$ with respect to a given norm $\left\|\cdot\right\|$. 
We extend $A$ from $D(A)=\{v\in H|\; Av\in H\}\subset H$ to $H$ in a distributional sense, so that $A:D(A)\to H$ and $A:H\to V'$.

\begin{lem}\label{lem:operator-inverse}
	Let $V'$ be the topological dual of $V$ as in~\eqref{eq:V} and let $a\in\bR^d\setminus\{0\}$. 
	It holds that $V=D(A^*)$, the embeddings $V\hookrightarrow H\hookrightarrow  V'$ are dense, and that 
	$$A^*:D(A^*)=V\to H,\quad\text{and}\quad
	A:H\to V'$$ 
	are isometric, bounded linear operators.
\end{lem}
\begin{proof}
	Let $v\in D(A^*)$ and $w:=A^*v \in H$. 
	As $a\neq0$, we obtain from \cite[Remark 2.2, case (i)]{DHSW12} that $A^*[C_-^1(\cD)]$ is dense in $H$.
	Thus, there is a sequence $(v_n, n\in\bN)\subset C_-^1(\cD)$ such that 
	$A^*v_n\to w = A^*v$ in $H$ as $n\to\infty$. 
	Hence, $v\in V= \textrm{clos}_{\|\cdot\|_V} C^1_-(\cD)$.  
	The remaining parts of the claim are given by \cite[Remark 2.2 and Proposition 2.1]{DHSW12}.
\end{proof}

We denote the inverse operators of $A^*$ and $A$ by 
$$A^{-*}:= (A^{*})^{-1}: H\to V,\quad\text{and}\quad A^{-1}:V'\to H,$$
and note that Lemma~\ref{lem:operator-inverse} implies for any $w\in H$ that
\begin{equation}\label{eq:dual-estimate}
	\|A^{-1}w\|_{H}=\|w\|_{V'}\le \sup_{v\in V} \frac{(w, v)_H}{\|v\|_V} 
	\le \|w\|_H \sup_{v\in V} \frac{\|A^{-*}\|_{\cL(H, V)}\|A^*v\|_H}{\|v\|_V} = \|w\|_H. 
\end{equation}
We further define the bilinear form
\be \label{eq:blf}
B:H\times V\to\bR,\quad (w,v)\mapsto(w,-A^*v)_H=\dualpair{V'}{V}{-Aw}{v}.
\ee
Clearly, $B(w,v)\le \|w\|_H \|v\|_V$ for $(w,v)\in H\times V$, and, since $A^*:V\to H$  is an isometry, it follows that $B$ satisfies the \emph{inf-sup condition}
\be\label{eq:infsup}
\inf_{w\in H, w\neq0} \sup_{v\in V, v\neq0} \frac{B(w,v)}{\|w\|_H \|v\|_V}\ge 1,
\ee
see \cite[Section 2.2]{DHSW12} for a detailed derivation.
Moreover, partial integration shows that 
\be\label{eq:skew-symmetry}
B(w,v)+B(v,w)
= (w,a\cdot\nabla v)_H + (v,a\cdot\nabla w)_H
=\int_{\partial\cD^+} a\cdot n(z) w(z)v(z)dz  
\quad v, w\in V.
\ee
As $v|_{\partial\cD^-}=0$ and $a\cdot n(z)>0$ on $\partial\cD^+$ we may define the seminorm
\be\label{eq:inflownorm}
|v|_{+,a}^2:=2B(v,v)=-2(v,A^*v)_H=\int_{\partial\cD^+} a\cdot n(z) v(z)^2dz\ge 0,
\quad v\in V.
\ee

The weak formulation of Eq.~\eqref{eq:ste} is now to find $X:\gO\times\bT\to H$ such that for all $v\in D(A^*)$ it holds
\be \label{eq:ste_weak}
\begin{split}
	(X(t),v)_H+\int_0^tB(X(s),v)ds&=(X_0,v)_H+\int_0^t(F(s,X(s)),v)_Hds
	+\left(\int_0^tG(s,X(s))dL(s), v\right)_H.
\end{split}
\ee

The numerical schemes to approximate $X$ and the corresponding error estimates are based on the weak formulation from Eq.~\eqref{eq:ste_weak}. As we see in Theorem~\ref{thm:spatial_reg}, however, mild solutions to Eq.~\eqref{eq:ste} are convenient to investigate the spatial regularity of $X$.
To this end, we show that the operator $A=a\cdot\nabla$ is the infinitesimal generator of a semigroup $S$ on $H$, namely the \textit{shift semigroup} given by
\be\label{eq:shift_semigroup}
[S(t)w](x):=\begin{cases}
	w(at+x)\quad&\text{if $at+x\in\cD$}\\
	0\quad&\text{if $at+x\notin\cD$}
\end{cases}.
\ee

\begin{lem}
	The family of operators $(S(t),t\ge0)$ defined in Eq.~\eqref{eq:shift_semigroup} forms a $C_0$-semigroup of bounded linear operators on $H$. Furthermore, the infinitesimal generator of $S$ is given by $A:D(A)\to H$.
\end{lem}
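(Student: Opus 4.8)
The plan is to verify the semigroup axioms for $(S(t),t\ge 0)$ directly and then identify its generator with $A=a\cdot\nabla$. First I would check that each $S(t)$ is a bounded linear operator on $H=L^2(\cD)$: linearity is immediate from the pointwise definition, and boundedness follows from a change of variables, since the map $x\mapsto x+at$ is a translation (measure-preserving) and zeroing out the part of the mass that leaves $\cD$ only decreases the $L^2$-norm, so $\|S(t)v\|_H\le\|v\|_H$, i.e. $S$ is a contraction semigroup. The algebraic semigroup property $S(0)=\mathrm{Id}$ and $S(t)S(s)=S(t+s)$ is a short computation from the definition: for $v$ supported in $\cD$, $[S(t)S(s)v](x)$ equals $v(a(t+s)+x)$ provided $at+x\in\cD$ \emph{and} $a(t+s)+x\in\cD$, and since $\cD$ is convex, $a(t+s)+x\in\cD$ together with $x\in\cD$ already forces the intermediate point $at+x\in\cD$ by convexity — this is the one place the convexity hypothesis on $\cD$ genuinely enters, and I would flag it explicitly.

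Next I would establish strong continuity, i.e. $\|S(t)v-v\|_H\to 0$ as $t\downarrow 0$ for every $v\in H$. The standard route: first prove it for $v$ in a dense subset, e.g. $v\in C_c(\cD)$ or $C^1_-(\cD)$, where $S(t)v\to v$ uniformly on compacts and the support stays bounded, so $L^2$-convergence is clear; then extend to all of $H$ by density together with the uniform bound $\|S(t)\|_{\cL(H)}\le 1$ (an $\eps/3$ argument). By \cite[Chapter 1.2]{P83} (or directly), a contraction semigroup that is strongly continuous has a densely defined closed generator $\widetilde A$, and it remains to show $\widetilde A = A$ with $D(\widetilde A)=D(A)=\{v\in H\mid a\cdot\nabla v\in H,\ v|_{\partial\cD^+}=0\}$.

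For the generator identification I would argue both inclusions. If $v\in D(A)$ (classical domain, so $v$ is differentiable along $a$ with $a\cdot\nabla v\in H$ and the inflow trace vanishes), then $\frac{1}{t}(S(t)v-v)(x) = \frac1t\big(v(x+at)-v(x)\big)$ for $x$ such that the whole segment lies in $\cD$, which converges in $L^2$ to $a\cdot\nabla v$; the boundary layer where $x+at\notin\cD$ has measure $O(t)$ and, using the vanishing inflow trace, contributes negligibly to the $L^2$-difference quotient — so $v\in D(\widetilde A)$ and $\widetilde A v = a\cdot\nabla v$. Conversely, since $\widetilde A$ is the generator of a $C_0$-semigroup it has nonempty resolvent set (indeed $(0,\infty)\subset\rho(\widetilde A)$ as $S$ is a contraction), and for $\gl>0$ the resolvent $(\gl-\widetilde A)^{-1}$ is given by the Laplace transform $\int_0^\infty e^{-\gl t}S(t)v\,dt$; computing this integral explicitly shows it solves the transport resolvent equation $\gl u - a\cdot\nabla u = v$ with $u|_{\partial\cD^+}=0$, so its range is contained in $D(A)$. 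Since $\gl - A$ is also injective on $D(A)$ (by the same explicit solution formula / integration along characteristics) and $(\gl-\widetilde A)^{-1}$ is onto $D(\widetilde A)$, the two domains coincide and $\widetilde A=A$.

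The main obstacle is the careful treatment of the boundary/outflow effects: controlling the $O(t)$-measure ``lost mass'' layer in both the contraction estimate and, more delicately, in the difference-quotient computation that identifies $\widetilde A v = a\cdot\nabla v$ on $D(A)$, where one must use that functions in $D(A)$ have vanishing inflow trace so the newly-created zeros near $\partial\cD^+$ do not spoil $L^2$-convergence of $\frac1t(S(t)v-v)$. Everything else is the textbook Hille–Yosida-type bookkeeping, and the convexity of $\cD$ is exactly what makes the semigroup law hold without the shifted point wandering outside the domain and back in.
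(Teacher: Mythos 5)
Your proposal is correct, and while the first two steps match the paper, your identification of the generator takes a genuinely different route. On the elementary part you agree in substance with the paper: contractivity and the algebraic semigroup law follow from the definition in Eq.~\eqref{eq:shift_semigroup} (and you are right, and more explicit than the paper, that convexity of $\cD$ is exactly what prevents the intermediate point $at+x$ from leaving the domain when $x$ and $a(t+s)+x$ both lie in $\cD$), and strong continuity is obtained on a dense class of compactly supported functions and extended using $\|S(t)\|_{\cL(H)}\le 1$. For the generator, however, the paper only takes $v\in C_c^2(\cD)$, shows by Taylor expansion that the difference quotients converge uniformly on $\cD$ (hence in $H$) to $a\cdot\nabla v$, and closes ``by density of $C_c^2(\cD)$ in $H$''; it never discusses the boundary layer, the inflow trace, or the resolvent. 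You instead prove two inclusions: $L^2$-convergence of the difference quotient for every $v$ in the full domain $D(A)$, with the $O(t)$-thick strip near $\partial\cD^+$ (where $S(t)v$ is cut to zero) controlled via the vanishing inflow trace, and conversely $D(\widetilde A)\subseteq D(A)$ by computing the Laplace-transform representation of the resolvent explicitly along characteristics and checking it solves $\gl u-a\cdot\nabla u=v$ with $u|_{\partial\cD^+}=0$. What your route buys is a complete identification of the generator \emph{including its domain and the boundary condition}: density of $C_c^2(\cD)$ in $H$ alone, as invoked by the paper, only shows that a dense subset of $H$ lies in $D(\widetilde A)$ with $\widetilde A=A$ there, and to conclude one still needs a core argument (e.g.\ invariance of the subspace under $S$, which is delicate here because shifting and cutting can destroy smoothness) or precisely the resolvent computation you carry out. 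The paper's argument, in exchange, is shorter and avoids all characteristics and boundary-layer bookkeeping, which is presumably why it was preferred there.
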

\begin{proof}
	By the definition of $S$, it is immediate that $\|S(t)w\|_H\le\|w\|_H$, $S(0)=I$, and $S(t+s)=S(t)S(s)$ for $t,s\in\bT$.
	Hence, $(S(t),t\ge0)$ is a semigroup of bounded linear operators on $H$.
	To see that $(S(t),t\ge0)$ is strongly continuous, let $w\in C_c^0(\cD)\subset H$ be a compactly supported, continuous function on $\cD$.
	Furthermore, let $\widetilde w\in C_c^0(\bR^d)$ be the zero-extension of $w$ on $\bR^d$ given by
	\be\label{eq:zeroext}
	\widetilde w(x):=\begin{cases}
		w(x)\quad&\text{if $x\in\cD$}\\
		0\quad&\text{if $x\in\bR^d\setminus\cD$}
	\end{cases}.
	\ee
	This yields
	\bee
	\lim_{t\to 0}\|S(t)w-w\|^2_H=\lim_{t\to 0}\int_{\cD} (\widetilde w(at+x)))-\widetilde w(x))^2dx
	=\int_{\cD} \lim_{t\to 0}(\widetilde w(at+x)))-\widetilde w(x))^2dx=0.
	\eee
	Note that the interchange of limit and integral is justified, since $\widetilde w$ is uniformly bounded on $\bR^d$.
	The last identity holds due to continuity of $\widetilde w$ on $\bR^d$, which is in turn given since $w$ is compactly supported in the open set $\cD$.
	By density of $C^0_c(\cD)$ in $H$, it follows that $S$ is a $C_0$-semigroup on $H$.
	
	For the second part of the claim, we need to verify that for all $w\in D(A)\subset H$ it holds that
	\be\label{eq:generator}
	\lim_{t\to 0}\left\|\frac{S(t)w-w}{t}-Aw\right\|_H=0.
	\ee	
	To this end, we observe that 
	\begin{align*}
		D(A)&=\{w\in H|\,
		\text{there is $c=c(w)>0$ such that $|(w, A^*v)_H|\le c \|v\|_H$ for all $v\in D(A^*)$}
		\}\\
		&=
		\textrm{clos}_{\|A\cdot\|_H} C^1_+(\cD).
	\end{align*}
	It is therefore sufficient to verify~\eqref{eq:generator} for $w\in C^1_+(\cD)$. 
	Observe that
	for any fixed $x\in\cD$, there is a $t_x>0$ such that $at+x\in\cD$ for all $t\in(0,t_x)$.
	Hence, multidimensional Taylor expansion yields for some $\gt\in(0,1)$
	\bee
	\lim_{t\to 0}\frac{[S(t)w](x)-w(x)}{t}
	=\lim_{t\to 0}\left(\frac{w(at+x)-w(x)}{t} \right)
	= \lim_{t\to 0} a\cdot\nabla w(\gt at + x)
	= a\cdot\nabla w(x).
	\eee
	The last limit holds as $\nabla w$ is continuous on $\cD$. 
	Thus, Equation~\eqref{eq:generator} holds for
	for all $w\in C_+^1(\cD)$, and the claim follows since $C_+^1(\cD)$ is dense in $D(A)$.
	 \end{proof}

Regarding the mild solution in~\eqref{eq:mild},
we see that the shift by $S$ as in~\eqref{eq:shift_semigroup} may introduce (spatial) discontinuities or "kinks" if $X_0, F$ and $G$ do not decay smoothly to zero at the inflow boundary.
To account for this and derive the spatial regularity of $X$ we have to strengthen Assumption~\ref{ass1}:

\begin{assumption}\label{ass2}
	For $A,F,G,L$ and $X_0$ given as in~\eqref{eq:ste}, and the orthonormal eigenpairs $((\eta_k, e_k),k\in\bN)\subset \bR_0\times U$ of $Q$ let the following hold:
	\begin{enumerate}[label=(\roman*)]
		\item \label{item:EV} $L$ is a square integrable, $U$-valued L\'evy process with zero mean and trace class covariance operator $Q$. The eigenvalues $(\eta_k,k\in\bN)$ of $Q$ are in decreasing order and there are constants $\ga>1$ and $C>0$ such that $\eta_k\le Ck^{-\ga}$ for all $k\in\bN$.
		
		\item \label{item:X0} There are constants $\gg_0>0 $ and $C>0$ such that $X_0\in L^2(\gO;H^{\gg_0})$ is a $\cF_0$-measurable random variable, and for all $t\ge0$ it holds that $\bE(\|S(t)X_0\|_{H^{\gg_0}}^2)\le C  \bE(\|X_0\|_{H^{\gg_0}}^2)$.
		
		\item \label{item:Lipschitz2}  $F:\bT\times H\to H$ and $G:\bT\times H\to \LHS(\cU,H)$ are H\"older continuous with exponent $\frac{1}{2}$ on $\bT$ and
		globally Lipschitz on $H$, i.e. for all $ v,w\in H$ and $s,t\in\bT$ it holds that
		\begin{align*}
			\|F(t,v)-F(s,v)\|_H+\|G(t,v)-G(s,v)\|_{\LHS(\cU,H)}\le C|t-s|^{\frac{1}{2}}(1+\|v\|_H),
		\end{align*}
		and
		\bee
		\|F(t,v)-F(t,w)\|_H+\|G(t,v)-G(t,w)\|_{\LHS(\cU,H)}\le C\|v-w\|_H.
		\eee
%		Furthermore, we assume Lipschitz continuity of $F$ and $G$ in the dual space of $V$, meaning
%		\bee
%		\|F(t,v)-F(t,w)\|_{V'}+\|G(t,v)-G(t,w)\|_{\LHS(\cU,V')}\le C\|v-w\|_{V'}.
%		\eee
			
		\item \label{item:lingrowthF}
		There are constants $\gg_F>0 $ and $C>0$ such that for all $v\in H^{\gg_F}$ and $s,t\in\bT$ with $s\le t$ it holds
		\bee
		\|S(t-s)F(t,v)\|_{H^{\gg_F}}\le C(1+\|v\|_{H^{\gg_F}}).
		\eee
		
		\item \label{item:lingrowthG}
		There are constants $\gg_G>0 $, $\gb\in(0,(\ga-1)/2\ga)$ (where $\ga>1$ is as in~\ref{item:EV}) and $C>0$ such that for all $v\in H^{\gg_G}$, $k\in\bN$ and $s,t\in\bT$ with $s\le t$ it holds that
		\bee
		\|S(t-s)G(t,v)e_k\|_{H^{\gg_G}}\le C(1+\|v\|_{H^{\gg_G}})\eta_k^{-\gb}.
		\eee
	\end{enumerate}
\end{assumption}

\begin{rem}\label{rem:ass2}
	~
	\begin{itemize}
		\item For functions $v\in H^\gg$ with arbitrary large $\gg>0$, the shift by $S(t)$ results in a discontinuous function and hence $S(t)v\in H^{1/2-\eps}(\cD)$ for any $\eps>0$ and $t>0$. In our experiments in Section~\ref{sec:numerics}, we consider functions that vanish on $\partial\cD^+$, but have nonzero derivatives at $\partial\cD^+$, for which assumption \ref{ass2}\ref{item:lingrowthF} and \ref{item:lingrowthG} holds with improved regularity $\gg:=min(\gg_0,\gg_F,\gg_G)\in (1/2, 3/2)$.
		
		\item 	Assumption~\ref{ass2}\ref{item:Lipschitz2} on the H\"older continuity with respect to $\bT$ is necessary to ensure the rate of convergence of the time stepping scheme introduced in Section~\ref{sec:time}. This condition implies in particular that $F$ and $G$ are measurable with respect to $\bT$.
	\end{itemize}
\end{rem}

\begin{ex}\label{ex:matern2}
	Recall Example~\ref{ex:matern} with $U=L^2(\cD)$, $Q$ as the Mat\'ern covariance operator from Eq.~\eqref{eq:matern}
	with smoothness parameter $\nu>0$ and assume for any $\gg<\nu$ that
	\be\label{eq:g_est}
	\|S(t-s)G(s,v)e_k\|_{H^\gg}\le C(1+\|v\|_{H^\gg})\|e_k\|_{H^\gg}.
	\ee
	By \cite[Proposition 9]{Gr15}, Assumption~\ref{ass2}\ref{item:EV} holds with $\ga=1+2\nu/d>1$.
	Moreover, if $\nu>d/2$, the proof of \cite[Proposition 9]{Gr15} yields for all $q,\widetilde q$ such that $0\le q\le \widetilde q < d+2\nu$ the estimate
	\bee
	\|e_k\|_{H^q}\le C\eta_k^{-q/\widetilde q}.
	\eee
	Now let $q=\nu-\eps_1$ and $\widetilde q=d+2\nu-\eps_2$, where $\eps_1>0$ is arbitrary small and $\eps_2\in(0,\eps_1(d/\nu+2))$.
	By construction, $\gb=q/\widetilde q$ satisfies $0<\gb<\nu/(d+2\nu)=(\ga-1)/2\ga$ and Ineq.~\eqref{eq:g_est} yields for $\gg_G:=q$ that
	\bee
	\|G(s,v)e_k\|_{H^{\gg_G}}\le C(1+\|v\|_{H^{\gg_G}})\eta_k^{-\gb}.
	\eee
	Regarding the eigenpairs of $Q$, Assumption~\ref{ass2}\ref{item:lingrowthG} is therefore satisfied for any $\gg_G<\nu$ and $\gb<\nu/(d+2\nu)$ in the Mat\'ern case, provided that $\nu>d/2$. Thus, we may infer the (maximum) "mean-square spatial regularity" of $X$ directly from $\nu$ (see Theorem~\ref{thm:spatial_reg}).
\end{ex}

\begin{thm}\label{thm:spatial_reg}
	Let Assumption~\ref{ass2} hold, define $\gg:=min(\gg_0,\gg_F,\gg_G)>0$ and $\iota:=min(\gg_0,\gg_F)>0$.
	There exists a unique solution $X$ to Eq.~\eqref{eq:ste_weak} and Eq.~\eqref{eq:mild}, and both solutions coincide almost surely. Moreover, there holds
	\bee
	\sup_{t\in\bT} \|X(t)\|^2_{L^2(\gO;H^\gg)}\le C\left(1+\|X_0\|^2_{L^2(\gO;H^\gg)}\right)<+\infty,
	\eee
	and
	\bee
	\sup_{t\in\bT} \|\bE(X(t))\|_{H^\iota}
	\le C\left(1+\|X_0\|_{L^1(\gO;H^\iota)}\right)<+\infty.
	\eee
\end{thm}
\begin{proof}
	Existence, uniqueness and equivalence of a weak and mild solution $X:\gO\times \bT\to H$ follow by Theorem~\ref{thm:exis}, since Assumption~\ref{ass2} implies in particular Assumption~\ref{ass1}.
	We recall that the mild solution $X\in\cX_\bT$ is the unique fixed-point of the iteration
	\bee
	X_n(t)=\Psi(X_{n-1})(t):=S(t)X_0+\int_0^tS(t-s)F(s,X_{n-1}(s))ds+\int_0^tS(t-s)G(s, X_{n-1}(s))dL(s),
	\quad n\in\bN,
	\eee
	and with $X_0(t):=X_0$ for all $t\in\bT$. To derive the spatial regularity of $X$, we use a similar strategy of proof as for the existence and uniqueness proof of mild solutions.
	We introduce the space
	\bee
	\cX_{\bT,\gg}:=\{Y:\gO\times\bT\to H^\gg|\,\text{$Y$ is predictable and $\sup_{t\in\bT} \bE(\|Y(t)\|_{H^\gg}^2)<+\infty$} \}\subset \cX_\bT,
	\eee
	with the weighted norm
	\bee
	\|Y\|_{\vartheta,\gg}:=\sup_{t\in\bT}e^{-\vartheta t}\bE(\|Y(t)\|_{H^\gg}^2)^{1/2},\quad Y\in \cX_{\bT,\gg},\quad \vartheta>0.
	\eee
	We first show by induction that $\|X_n\|_{\vartheta,\gg}<+\infty$ holds for all $n\in\bN$. Then,  we choose $\vartheta>0$ large enough so that $\Psi$ is a contraction on $(\cX_{\bT,\gg},\|\cdot\|_{\vartheta,\gg})$ to obtain a uniform (in $n\in\bN$) estimate for $\|X_n\|_{\vartheta,\gg}$.
	
	Starting with $X_1=\Psi(X_0)$, we note that $X_0\in \cX_{\bT,\gg}$ by Assumption~\ref{ass2}\ref{item:X0}, and obtain
	\bee
	\|S(\cdot)X_0\|^2_{\vartheta,\gg}
	=\sup_{t\in\bT}e^{-\vartheta t}\|S(t)X_0\|^2_{L^2(\gO;H^\gg)}
	\le \sup_{t\in\bT}e^{-\vartheta t} C\|X_0\|^2_{L^2(\gO;H^\gg)}
	=C \|X_0\|^2_{L^2(\gO;H^\gg)}<+\infty.
	\eee
	Jensen's inequality and Assumption~\ref{ass2}\ref{item:lingrowthF} yield similarly
	\begin{align*}
		\left\|\int_0^\cdot S(\cdot-s)F(s,X_0(s))ds\right\|^2_{\vartheta,\gg}	&=\sup_{t\in\bT}\int_0^te^{\vartheta(s-t-s)}\|S(t-s)F(s,X_0(s))\|^2_{L^2(\gO;H^\gg)}ds\\
		&\le C\sup_{t\in\bT}\int_0^t e^{\vartheta (s-t)}e^{-\vartheta s}\left(1+\|X_0(s)\|^2_{L^2(\gO;H^\gg)}\right)ds\\
		&\le C\left(T+\sup_{t\in\bT}\int_0^t e^{\vartheta (s-t)}ds \|X_0\|^2_{\vartheta,\gg}\right)\\
		&\le C\left(1+\frac{1-e^{-\vartheta T}}{\vartheta} \|X_0\|^2_{\vartheta,\gg}\right).
	\end{align*}
	Moreover, the It\^o isometry from Lemma~\ref{lem:isometry} shows for all $t\in\bT$ the identity
	\begin{align*}
		\left\|\int_0^tS(t-s)G(s,X_0(s))dL(s)\right\|_{L^2(\gO;H^\gg)}^2
		&=\bE\left(\int_0^t\|S(t-s)G(s,X_0(s))\|^2_{\LHS(\cU,H^\gg)}ds\right)\\
		&=\bE\left(\int_0^t\sum_{k\in\bN}\eta_k\|S(t-s)G(s,X_0(s))e_k\|^2_{H^\gg}ds\right).
	\end{align*}
	With Assumption~\ref{ass2}\ref{item:lingrowthF} this yields the estimate
	\begin{align*}
		\left\|\int_0^\cdot S(\cdot-s)G(s,X_0(s))dL(s)\right\|_{\vartheta,\gg}^2
		&\le C\sum_{k\in\bN}\eta_k^{1-2\gb}\sup_{t\in\bT}e^{-\vartheta t}\int_0^t \left(1+\|X_0(s)\|^2_{L^2(\gO;H^\gg)}\right)ds\\
		&\le C\sum_{k\in\bN}k^{-\ga(1-2\gb)}\left(T+\sup_{t\in\bT}\int_0^te^{\vartheta(s-t-s)}\|X_0(s)\|^2_{L^2(\gO;H^\gg)}ds\right)\\
		&\le C\sum_{k\in\bN}k^{-\ga(1-2\gb)}\left(1+\frac{1-e^{-\vartheta T}}{\vartheta} \|X_0\|^2_{\vartheta,\gg}\right).
	\end{align*}
	Since $\ga(1-2\gb)>1$ by Assumption~\ref{ass2}\ref{item:lingrowthG}, it holds that $\sum_{k\in\bN}k^{-\ga(1-2\gb)}<+\infty$, and we obtain
	\begin{align*}
		\|X_1\|^2_{\vartheta,\gg}
		&\le C\left(1+\|X_0\|^2_{L^2(\gO;H^\gg)}+\frac{1-e^{-\vartheta T}}{\vartheta}\|X_0\|^2_{\vartheta,\gg}\right)<+\infty,
	\end{align*}
	where $C>0$ is finite and uniformly bounded in $\bT$.
	We now iterate this estimate over $n\in\bN$ and obtain
	\begin{align*}
		\|X_n\|^2_{\vartheta,\gg}
		&\le C\left(1+\|X_0\|^2_{L^2(\gO;H^\gg)}+\frac{1-e^{-\vartheta T}}{\vartheta}\|X_{n-1}\|^2_{\vartheta,\gg}\right)\\
		&\le \left(1+\|X_0\|^2_{L^2(\gO;H^\gg)}\right)\sum_{k=0}^{n}C^k\left(\frac{1-e^{-\vartheta T}}{\vartheta}\right)^k.
	\end{align*}
	We choose $\vartheta>0$ so that $C(1-e^{-\vartheta T})/\vartheta<1$ (e.g., $\vartheta=C$), which gives a uniform bound with respect to $n$ for the last estimate.
	Taking the limit $n\to\infty$ and multiplying by $e^{\vartheta T}$ yields the first part of the claim.

	To prove the second part, observe that Assumption~\ref{ass1} implies for any $Y\in L^1(\gO; H^{\gg_F})$ and $s,t\in\bT$ with $s\le t$ that
	\begin{align*}
		\|\bE(S(t-s)F(s, Y))\|_{H^{\gg_F}}
		&\le 
		\|S(t-s)F(s, \bE(Y))\|_{H^{\gg_F}} 
		+ \|\bE(S(t-s)F(s, Y)-S(t-s)F(s, \bE(Y)))\|_{H^{\gg_F}} \\
		&\le 
		C(1+\|\bE(Y)\|_{H^{\gg_F}}
		+ \bE(\|Y-\bE(Y)\|_{H^{\gg_F}})) \\
		&\le 
		C(1+\|Y\|_{L^1(\gO;H^{\gg_F})}). 
	\end{align*}
	
	Using that the stochastic integral in the fixed-point iteration for $X_n$ vanishes in expectation, we find by similar calculations as above for 
	$\iota=\min(\gg_0,\gg_F)$ that 
	\begin{align*}
		\sup_{t\in\bT}e^{-\vartheta t}\|\bE(X_1(t))\|_{H^\iota}
		&\le C\left(
		1+\|X_0\|_{L^1(\gO;H^\iota)}
		+\frac{1-e^{-\vartheta T}}{\vartheta}\|\bE(X_0)\|_{H^\iota}
		\right).
	\end{align*}
	The proof is then finished by again iterating the esimates over $n$ as there holds
	\begin{align*}
		\sup_{t\in\bT}e^{-\vartheta t}\|\bE(X_n(t))\|_{H^\iota}
		&\le C\left(
		1+\|X_0\|_{L^1(\gO;H^\iota)}
		+\frac{1-e^{-\vartheta T}}{\vartheta}\sup_{t\in\bT}e^{-\vartheta t}\|\bE(X_{n-1}(t))\|_{H^\iota}
		\right) \\
		&\le C\left(
		1+\|X_0\|_{L^1(\gO;H^\iota)}\right)
		\sum_{k=0}^{n}C^k\left(\frac{1-e^{-\vartheta T}}{\vartheta}\right)^k.
	\end{align*}
\end{proof}

\begin{thm}\label{thm:ms_continuity}
	Let Assumption~\ref{ass2} hold.
	There is a $C>0$ such that for all $s,t\in\bT$
	\bee
	\bE\left(\|X(t)-X(s)\|^2_{V'}\right)\le C|t-s|.
	\eee
	Furthermore, if Assumption~\ref{ass2} holds with $\gg:=\min(\gg_0,\gg_F,\gg_G)>\frac{1}{2}$ and $\iota:=\min(\gg_0,\gg_F)\ge1$, then there is a $C>0$ such that for all $s,t\in\bT$
	\bee
	\bE\left(\|X(t)-X(s)\|^2_{H}\right)\le C|t-s|
	\quad\text{and}\quad 
	\|\bE(X(t)-X(s))\|_H\le C|t-s|.
	\eee
\end{thm}

\begin{proof}
	Under Assumption~\ref{ass2} it holds by Theorem~\ref{thm:exis} that
	\bee
	\sup_{t\in\bT} \|X(t)\|^2_{L^2(\gO;H)}\le C\left(1+\|X_0\|^2_{L^2(\gO;H)}\right)<+\infty.
	\eee
	With the weak formulation~\eqref{eq:ste_weak} we obtain for any test function $v\in V$ and $s,t\in\bT$ with $t\ge s$
	\be\label{eq:increment}
	(X(t)-X(s),v)_H=-\int_s^tB(X(r),v)dr+\int_s^t(F(r,X(r)),v)_Hds+
	\left(\int_s^tG(r,X(r))dL(r),v\right)_H.
	\ee
	Recalling that $\left\|\cdot\right\|_V = \left\|A^*\cdot\right\|_H$ yields with the triangle inequality
	\begin{align*}
		|(X(t)-X(s),v)_H|&\le \left|\int_s^t (X(r),A^*v)_Hdr\right|
		+\int_s^t \|F(r,X(r))\|_{V'}dr\|v\|_V
		+\Big\|\int_s^t G(r,X(r))dL(r)\Big\|_{V'}\|v\|_V\\
		&\le\left(\int_s^t \|X(r)\|_Hdr + \int_s^t \|F(r,X(r))\|_Hdr
		+\Big\|\int_s^t G(r,X(r))dL(r)\Big\|_H \right)\|v\|_V.
	\end{align*}
	Jensen's inequality then implies
	\begin{align*}
		\|X(t)-X(s)\|^2_{V'}
		&\le3\left(\left(\int_s^t \|X(r)\|_Hdr\right)^2
		+ \left(\int_s^t \|F(r,X(r))\|_Hdr\right)^2
		+\Big\|\int_s^t G(r,X(r))dL(r)\Big\|^2_H \right)\\
		&:=3(I^2+II^2+III^2).
	\end{align*}
	We take expectations and bound each term on the right and side.
	Since $\sup_{t\in\bT} \|X(t)\|_{L^2(\gO;H)}<+\infty$ it follows with Hölder's inequality that
	\bee
	\bE(I^2)\le \bE\left((t-s)\int_s^t \|X(r)\|_H^2dr\right)
	\le (t-s)^2 \sup_{t\in\bT} \|X(t)\|^2_{L^2(\gO;H)} \le C(t-s)^2.
	\eee
	The second term is bounded with Hölder's inequality and Assumption~\ref{ass2}\ref{item:Lipschitz2} by
	\be\label{eq:aux1}
	\begin{split}
		\bE(II^2)&\le (t-s)\int_s^t\bE(\|F(r,X(r))\|_H^2)dr \\
		&\le (t-s)\int_s^t 1+\bE(\|X(r)\|_H^2)dr \\
		&\le (t-s)^2 (1+\sup_{r\in\bT}\bE(\|X(r)\|_H^2))\\
		&\le C(t-s)^2,
	\end{split}
	\ee
	where we have used Theorem~\ref{thm:exis} in the last estimate.
	The last term $III$ is estimated by Lemma~\ref{lem:isometry} and Assumption~\ref{ass2}\ref{item:EV} and \ref{item:Lipschitz2}:
	\be\label{eq:aux2}
	\bE(III^2)= \int_s^t\bE(\|G(r,X(r))\|^2_{\LHS(\cU,H)})dr
	\le C\int_s^t1+\bE(\|X(r)\|^2_H)dr
	\le C(t-s).
	\ee
	The last inequality follows analogously to $\bE(II^2)$, which proves that
	\bee
	\bE\left(\|X(t)-X(s)\|^2_{V'}\right)\le C|t-s|.
	\eee
	
	To show mean-square continuity of $X$ with respect to $H$ we use that $V$ is dense in $H^\gg$ for $\gg<\frac{1}{2}$, see \cite[Theorem 3.4.3]{triebel2010theory}, and that $A\in \cL(H^\gg, H^{\gg-1})$ for $\gg>1/2$. 
	It follows by~\eqref{eq:increment} that 
	\be\label{eq:ms-weak}
	\begin{split}
		(X(t)-X(s),v)_H&
		=\int_s^t \dualpair{H^{\gg-1}}{H^{1-\gg}}{AX(r)}{v} dr+\int_s^t(F(r,X(r)),v)_Hds\\
		&\quad +\left(\int_s^tG(r,X(r))dL(r),v\right)_H,
	\end{split}
	\ee
	where $H^{-\gg}$ denotes the dual space of $H^\gg$ for $\gg>0$. Since $\gg>\frac{1}{2}$, we have that $V$ is dense in $H^{1-\gg}$ and $H$, and therefore find that 
	\begin{align*}
	\bE(\|X(t)-X(s)\|_H^2)&\le \int_s^t \bE(\|AX(r)\|_{H^{\gg-1}} \|X(t)-X(s)\|_{H^{1-\gg}}) dr \\
	&\quad +\int_s^t\bE(\|F(r,X(r))\|_H\|X(t)-X(s)\|_H) dr \\
	&\quad + \bE\left(\left\|\int_s^tG(r,X(r))dL(r)\right\|_H^2\right)^{1/2}\bE(\|X(t)-X(s)\|_H^2)^{1/2}\\
	&\le C 
	\left(\int_s^t \bE(\|X(r)\|_{H^\gg}^2)^{1/2}dr\right) \bE(\|X(t)-X(s)\|_{H^{1-\gg}}^2)^{1/2} \\
	&\quad +\left(\int_s^t\bE(\|F(r,X(r))\|_H^2)^{1/2}dr\right)
	\bE(\|X(t)-X(s)\|_H^2)^{1/2} \\
	&\quad + \bE\left(\left\|\int_s^tG(r,X(r))dL(r)\right\|_H^2\right)^{1/2}\bE(\|X(t)-X(s)\|_H^2)^{1/2}.
	\end{align*}
	We then obtain $\bE(\|X(t)-X(s)\|_H^2)\le C|t-s|$ with similar arguments as in the first part of the proof by Assumption~\ref{ass2}\ref{item:Lipschitz2}, Lemma~\ref{lem:isometry} and Theorem~\ref{thm:spatial_reg}.

	For the last part, taking expectations in~\eqref{eq:ms-weak} and testing against $v=\bE(X(t)-X(s))$ yields 
	\begin{align*}
		\|\bE(X(t)-X(s))\|_H^2&\le 
		\int_s^t \|\bE(AX(r))\|_H \|\bE(X(t)-X(s))\|_H dr \\
		&\quad +\int_s^t\|\bE(F(r,X(r)))\|_H\|\bE(X(t)-X(s))\|_H dr \\
		&\le C \left(\int_s^t \|\bE(X(r))\|_{H^1}dr\right) \|\bE(X(t)-X(s))\|_H \\
		&\quad +C\left(\int_s^t\|\bE(F(r,X(r)))\|_H dr\right) \|\bE(X(t)-X(s))\|_H \\
		&\le C |t-s|\|\bE(X(t)-X(s))\|_H,
	\end{align*}
	where we have used  Theorem~\ref{thm:spatial_reg} and that $\iota\ge 1$.
\end{proof}
%
%\begin{rem}
%	\as{A sufficient condition for $\sup_{t\in\bT}\bE(\|AX(t)\|_H^2)<\infty$ is that Theorem~\ref{thm:spatial_reg} holds with $\gg=1$. 
%	However, if $d>1$, we only require weak derivatives in the coordinates $i\in\{1,\dots,d\}$ with $a_i\neq0$, and thus $\sup_{t\in\bT}\bE(\|AX(t)\|_H^2)<\infty$ may hold under weaker conditions.}
%\end{rem}

In most cases, it is impossible to access $X$ analytically as the paths of $X$ are time-dependent random functions taking values in the infinite-dimensional Hilbert space $H$. The time dependency of each sample may be reflected in the coefficients of a suitable basis expansions, but in general no tractable representations are available.
Even if closed form solutions with respect to $X_0$ and a given path of $L$ were known, it would still be unclear how to sample the infinite-dimensional L\'evy process $L$.
We address these issues by introducing a discontinuous Galerkin spatial discretization in Section \ref{sec:dg} and suitable time stepping in Section \ref{sec:time}.
Thereafter, we discuss an approximate sampling technique for $L$, which yields a fully discrete approximation scheme for the stochastic transport problem.

%%%%%%%%%%%%%%%%%%%%%%%%%%%%%%%%%%%%%%%%%%%%%%%%%
\section{Discontinuous Galerkin Spatial Discretization}\label{sec:dg}
%%%%%%%%%%%%%%%%%%%%%%%%%%%%%%%%%%%%%%%%%%%%%%%%%

In this section we discretize Eq.~\eqref{eq:ste_weak} with respect to the spatial domain $\cD$.
To this end, let $\mathfrak H\subset (0,\infty)$ be a countable set and let $(\cK_h, h\in\mathfrak H)$ be a sequence of uniform, shape-regular triangulations of $\cD$, indexed by $h:=\max_{K\in\cK_h}\text{diam}(K)>0$. We denote by $\partial\cK_h$ the set of all faces of $\cK_h$ for fixed $h\in\mathfrak H$ . 
To achieve optimal spatial convergence rates, we follow the approach in \cite{CDG08} and consider triangulations $(\cK_h, h\in\mathfrak H)$ that satisfy certain \emph{flow conditions} with respect to transport direction $a$. The face $E\in \partial\cK_h$ of any given triangle $K$ is called an \emph{inflow face} if $a\cdot \vv n_K|_E>0$, and \emph{outflow face} whenever $a\cdot \vv n_K|_E<0$.
\begin{assumption}\label{ass:flow}
	The domain $\cD$ is polygonal with piecewise linear boundary (in case that $d=2$) and $(\cK_h, h\in\mathfrak H)$ is a sequence of uniform, shape-regular triangulations of $\cD$, such that $\mathfrak H\subseteq (0,1]$.
	For each $h\in\mathfrak H$, a given triangulation $\cK_h$ satisfies the following two conditions:
	\begin{align}
		\label{eq:flow_cond1}
		&\text{Each simplex $K\in\cK_h$ has a unique inflow face, denoted by $E_K^+$.}\\
		\label{eq:flow_cond2}
		&\text{Each interior inflow face  $E_K^+$ is included in an outflow face of another simplex $\widetilde K\in\cK_h$.} 
	\end{align}
\end{assumption}
We assume that $\cD$ is polygonal for convenience only, to omit errors due to the piecewise linear approximation of $\partial\cD$ in this case. The ensuing numerical analysis shows that the flow conditions on the inflow/ouflow faces on $\cK_h$ are indeed crucial to obtain the maximum rate of convergence for $d=2$. In case that $d=1$, \eqref{eq:flow_cond1} and~\eqref{eq:flow_cond2} are always satisfied when partitioning $\cD$ into intervals of arbitrary length.

The discrete discontinuous Galerkin (DG) space $H_h\subset H$ of piecewise linear polynomials is given by
\bee
H_h:=\left\{w\in H\big|\; w|_K\in\mathbf{P_1}(K),\;K\in\cK_h\right\}.
\eee
Elements of $H_h$ are piecewise linear on the simplices $K$, but allow for jumps at the interfaces of the triangulation.
Hence, the space of continuous, piecewise linear finite elements is enclosed by $H_h$, whereas the DG approach yields additional stability with a suitably chosen numerical flux on the discontinuities.

Note that $Aw_h\notin L^2(\cD)$ for $w_h\in H_h$, but in general $Aw_h\in V'$, which is in line with the distributional extension $A:H\to V'$ from Section~\ref{sec:ste}.
The "broken version" of the $H$-scalar product and the induced norm with respect to $\cK_h$ are denoted by 
\be\label{eq:broken_norm}
(v,w)_{L^2(\cK_h)}:=\sum_{K\in\cK_h}(v,w)_{L^2(K)},\quad \|v\|_{L^2(\cK_h)}:=(v,v)_{L^2(\cK_h)}.
\ee
Clearly, $(\cdot,\cdot)_{L^2(\cK_h)}$ and $(\cdot,\cdot)_H$ coincide on $H$.
Analogously, we define the broken Sobolev norms and semi-norms for any $\gg>0$ via
\bee
\|v\|_{H^\gg(\cK_h)}:=\sum_{K\in\cK_h}\|v\|_{H^\gg(K)},\quad |v|_{H^\gg(\cK_h)}:=\sum_{K\in\cK_h}|v|_{H^\gg(K)}.
\eee
The corresponding broken Sobolev spaces are given as
\bee
H^\gg(\cK_h):=\{v\in H|\; \|v\|_{H^\gg(\cK_h)}<\infty\},
\eee
and we note that $H_h\subset H^1(\cK_h)$.
The key ingredient to prove optimal convergence rates for deterministic transport equations in \cite{CDG08} is a special projection $\cP_h:H\to H_h$, defined by the two properties
\begin{equation}\label{eq:projection}
	\begin{split}
		(\cP_hw-w, v)_{K} &= 0\quad \text{for any $K\in\cK_h, w\in H$ and $v\in \mathbf{P_0}(K)$,}\\
		(\cP_hw-w, \widehat v)_{E_K+} &= 0\quad \text{for any $K\in\cK_h, w\in H$ and $\widehat v\in \mathbf{P_1}(E_K^+)$.}\\
	\end{split} 
\end{equation}

\begin{lem}{\cite[Proposition 2.1]{cockburn2008superconvergent}/\cite[Lemma 2.1]{CDG08}}
	\label{lem:DG_interp}
	Under Assumption~\ref{ass:flow}, the projection $\cP_h:H\to H_h$ given by~\eqref{eq:projection} is well-defined for any fixed $h\in\mathfrak H$. 
	Furthermore, for any $\gg\ge 0$ there is a $C>0$ such that for any $v\in H^\gg(\cK_h)$ and $h\in\mathfrak H$ there holds
	\bee
	\|v-\cP_hv\|_{L^2(\cK_h)}\le C|v|_{H^\gg(\cK_h)}\,h^{\min(2,\gg)}.
	\eee
\end{lem}

In order to derive a discrete weak formulation for~\eqref{eq:ste} with respect to $H_h$, we define the bilinear form
\begin{equation}\label{eq:blf_discrete}
	B_h:H^1(\cK_h)\times H^1(\cK_h)\to\bR,\quad (w_h,v_h)\mapsto (w_h, a\cdot \nabla v_h)_{L^2(\cK_h)} 
	-  \sum_{E\in\partial\cK_h\setminus\cD^+}(\vv n\cdot a w_h, v_h)_{L^2(E)}.
\end{equation}
Note that the homogeneous Dirichlet boundary conditions at $\partial\cD^+$ have been imposed implicitly~\eqref{eq:blf_discrete}, as we do not sum over the inflow boundary edges.
As $aw_h$ is not uniquely determined at the faces $E$, it remains to determine a \textit{numerical flux} on each face, that is not parallel to the flow direction $a$ or belongs to the outflow boundary $\partial\cD^{-}$. 
To this end, consider two given simplices $K^+,K^-$ sharing a common interior face $E\in\partial\cK_h$ such that $E\cap\partial\cD=\emptyset$.
The outward normal vectors of $K^+$ and $K^-$ on $E$ are denoted by $\vv n^+$ and $\vv n^-$, respectively.
Similarly, for a scalar function $\psi:K^+\cup K^-\to\bR$, we define by $\psi^+$ the trace of $\psi|_{K^+}$ on $E$, and $\psi^-$ is the trace of $\psi|_{K^-}$ on $E$.
We denote the jump $\dgj \cdot$ resp. average $\dga\cdot$ across $E$ of $\psi$ by
\bee
\dgj{\psi}
:=
%\begin{cases}
%	\vv n^+\cdot \psi^++\vv n^-\cdot \psi^-\quad&\text{if $d=2$}\\
	\vv n^+\psi^++\vv n^-\psi^-
%\end{cases},
\quad\text{and}\quad\dga{\psi}:=\frac{\psi^+ + \psi^-}{2}.
\eee
Note that $\dgj{\psi}$ is a vector for scalar $\psi$, and that conversely $\dgj{\psi}$ is scalar if $\psi$ is a vector-valued function.
As numerical flux on $E$, we use the \textit{upwind flux} given by
\be\label{eq:upwind_flux}
(\vv n\cdot a w_h, v_h)_{L^2(E)}
:=\int_E aw_h^-\cdot\dgj{v_h}dz
=\int_E \left(\dga{aw_h}-\frac{|a\cdot \vv n|}{2}\dgj{w_h}\right)\cdot\dgj{v_h}dz,\quad w_h,v_h\in H_h
\ee
(for the second equality in \eqref{eq:upwind_flux} see, e.g., \cite[Section 3]{BMS04}).
To treat the boundary edges $E\subset\partial\cD^-$, we define the numerical flux
\bee
(\vv n\cdot a w_h,v_h)_{L^2(E)}:=\int_E aw_h^-\cdot\dgj{v_h}dz
=-\int_E a\cdot \vv n^{+} w_h^-v_h^-dz,
\eee
with the convention that $\vv n^+$ is the \emph{outward} pointing normal of $\cD$ at $E$.

For simplicity, we set the discrete initial condition as $X_h(0):=\cP_hX_0$. The semi-discrete weak problem is now to find $X_h:\gO\times\bT\to H_h$, such that for any $t\in\bT$ and  $v_h\in H_h$ it holds that
\be
\begin{split}\label{eq:ste_dg}
	(X_h(t),v_h)_H +\int_0^t B_h(X_h(s),v_h) ds 
	&=
	(X_h(0),v_h)_H
	+\int_0^t (F(s, X_h(s)), v_h)_Hds \\
	&\quad +\left(\int_0^t G(s,X_h(s))dL_s,v_h \right)_H.
\end{split}
\ee

As one of the main results of this article, we provide an estimate on the semi-discrete error $X-X_h$: 
\begin{thm}\label{thm:DGerror}
	Let Assumptions~\ref{ass2} and~\ref{ass:flow} hold such that $\gg=\min(\gg_0, \gg_F, \gg_G)>1/2$, and denote by $X$ and $X_h$ (for any $h\in\mathfrak H$) the solutions to Problem~\eqref{eq:ste_weak} and Problem~\eqref{eq:ste_dg}, respectively.
	Then, there is a constant $C>0$ such that for all $h\in\mathfrak H$ and $t\in\bT$ there holds
	\bee
	\sup_{t\in \bT} \|X(t)-X_h(t)\|_{L^2(\gO; H)}\le C h^{\min(\gg,2)}.
	\eee
\end{thm}

As a first step for our error analysis, we have to make sure that the true solution $X$ is actually in line with the weak formulation in~\eqref{eq:ste_dg}. As $H_h\subsetneq V$, this is not obvious at first glance. 
\begin{lem}\label{lem:weak_conform}
	Let Assumption~\ref{ass2} hold such that $\gg=\min(\gg_0,\gg_F,\gg_G)>\frac{1}{2}$.
	Then, the unique weak solution $X$ to~\eqref{eq:ste_weak} satisfies for all $t\in[0,T]$ and $v_h\in H_h$ that
	\be
	\begin{split}\label{eq:ste_dg_conform}
		(X(t),v_h)_H +\int_0^t B_h(X(s),v_h) ds 
		&=
		(X(0),v_h)_H
		+\int_0^t (F(s, X(s)), v_h)_Hds \\
		&\quad +\left(\int_0^t G(s,X(s))dL_s,v_h \right)_H.
	\end{split}
	\ee
\end{lem}

\begin{proof}
	Theorem~\ref{thm:spatial_reg} yields that $X(t)\in H^\gg$ for all $t\in[0,T]$ and $\gg>1/2$. 
	This implies in particular, that the trace $X(t)|_{\partial K}\in L^2(K)$ is well-defined for any $K\in\cK_h$ and coincides with the numerical flux on every face $E\subset\partial K$, in the sense that there holds on each face $E\in\partial\cK_h$
	\begin{equation}\label{eq:num_flux}
		(\vv n\cdot aX(t), v_h)_{L^2(E)} = 
		\int_{E} \left(\dga{aX(t)}-\frac{|a\cdot \vv n|}{2}\dgj{X(t)}\right)\cdot\dgj{v_h}dz,\quad v_h\in H_h.
	\end{equation}
	
	Now assume first that $X(t)\in D(A)$ for any $t\in[0,T]$ and fix an arbitrary test function $v_h\in H_h\subset H$.
	By density of $V=A^{-*}H$ in $H$, partial integration yields on any $K\in\cK_h$ that
	\begin{equation}\label{eq:simplex}
		\begin{split}
			&(X(t),v_h)_{L^2(K)} - \int_0^t(AX(s), v_h)_{L^2(K)} ds \\
			&=
			(X(t),v_h)_{L^2(K)} + \int_0^t (X(s), a\cdot\nabla v_h)_{L^2(K)} 
			- (\vv n\cdot aX(s), v_h)_{L^2(\partial K)}ds\\
			&=
			(X(0),v_h)_{L^2(K)}
			+\int_0^t (F(s, X(s)), v_h)_{L^2(K)}ds 
			+\left(\int_0^t G(s,X(s))dL_s,v_h \right)_{L^2(K)}.
		\end{split}
	\end{equation} 
	Summation with respect to $K\in\cK_h$ thus yields~\eqref{eq:ste_dg_conform}.
	
	If $X(t)\notin D(A)$, we consider a sequence $(X_n, n\in\bN)$ of smooth approximations to $X$ instead, such that $X_n(t)\to X(t)$ in $H^\gg$ for all $t\in[0,T]$.
	Since we have for all $s\in[0,T]$ that
	\begin{equation*}
		\lim_{n\to\infty} \abs{(\vv n\cdot a(X(s)-X_n(s)), v_h)_{L^2(\partial K)}}
		\le \lim_{n\to\infty} C \|X(s)-X_n(s)\|_{H^{\gg}(K)}\|v_h\|_{H^{\gg}(K)} = 0
	\end{equation*}
	by the trace theorem, there holds
	\begin{align*}
		-\lim_{n\to\infty}\int_0^t(AX_n(s), v_h)_{L^2(K)} ds
		&=
		\lim_{n\to\infty} \int_0^t (X_n(s), a\cdot\nabla v_h)_{L^2(K)} 
		-(\vv n\cdot aX_n(s), v_h)_{L^2(\partial K)}ds \\
		&=
		\int_0^t (X(s), a\cdot\nabla v_h)_{L^2(K)} 
		- (\vv n\cdot aX(s), v_h)_{L^2(\partial K)}ds.
	\end{align*} 
	The claim thus follows by replacing $X$ by $X_n$ in~\eqref{eq:simplex} for given $K\in\cK_h$, taking the limit $n\to\infty$ and summing over all $K\in\cK_h$.
\end{proof}

To prove~\ref{thm:DGerror}, we also need to establish that $B_h$ is positive semi-definite.
\begin{lem}\label{lem:blf_pos}
	For any $v_h\in H^1(\cK_h)$ it holds that
	\begin{equation*}
			B_h(v_h,v_h)
			=\sum_{E\in\partial\cK_h}\int_E\frac{|a\cdot n|}{2}\dgj{v_h} \cdot\dgj{v_h}dz
			\ge 0.
		\end{equation*}
\end{lem}

\begin{proof}
	By~\eqref{eq:blf_discrete}, it holds for $v_h\in H^1(\cK_h)$ that
	\begin{equation}\label{eq:blf1}
		B_h(v_h,v_h) = (v_h, a\cdot \nabla v_h)_{L^2(\cK_h)} 
		-  \sum_{E\in\partial\cK_h\setminus\cD^+}(\vv n\cdot a v_h, v_h)_{L^2(E)}.
	\end{equation}
	For any interior edge $E\in\partial\cK_h$ we have by~\eqref{eq:upwind_flux}
	\bee
	\int_E\left(\dga{av_h}-\frac{|a\cdot n|}{2}\dgj{v_h}\right)\cdot\dgj{v_h}dz
	=\int_E\frac{a}{2}\cdot\dgj{v_h^2}-\frac{|a\cdot n|}{2}\dgj{v_h} \cdot\dgj{v_h}dz,
	\eee
	and the second term in~\eqref{eq:blf1} then reads
	\be\label{eq:blf2}
	\begin{split}
	 	\sum_{E\in\partial\cK_h\setminus\cD^+}(\vv n\cdot a v_h, v_h)_{L^2(E)}
		&= \sum_{E\in\partial\cK_h, E\cap\partial\cD=\emptyset}
		\int_E\frac{a}{2}\cdot\dgj{v_h^2}-\frac{|a\cdot n|}{2}\dgj{v_h} \cdot\dgj{v_h}dz\\
		&\quad+ \sum_{E\in\partial\cK_h, E\subset\partial\cD^-}\int_E a\cdot\vv n^{+} (v_h^{-})^2dz.
	\end{split}
	\ee
	On the other hand, partial integration yields
	\begin{align*}
		(v_h,a\cdot \nabla v_h)_{L^2(\cK_h)}
		&=-(a\cdot \nabla  v_h,v_h)_{L^2(\cK_h)}+\sum_{K\in\cK_h}\int_{\partial K} (\vv n\cdot a)v_h^2dz,
	\end{align*}
	and thus the first term in~\eqref{eq:blf1} is given by 
	\begin{equation}\label{eq:blf3}
		(v_h,a\cdot \nabla v_h)_{L^2(\cK_h)}
		=\frac{1}{2}\left(\sum_{E\in\partial\cK_h, E\cap\partial\cD=\emptyset}
		\int_E a\cdot\dgj{v_h^2}dz + 
		\sum_{E\in\partial\cK_h, E\subset\partial\cD}\int_E a\cdot \vv n^{+} (v_h^-)^2dz\right).
	\end{equation}
	Substituting Eqs.~\eqref{eq:blf2} and~\eqref{eq:blf3} in~\eqref{eq:blf1} now shows the claim, since
	\begin{equation*}
			B_h(v_h,v_h)
			=\sum_{E\in\partial\cK_h}\int_E\frac{|a\cdot n|}{2}\dgj{v_h} \cdot\dgj{v_h}dz\ge 0.
		\end{equation*}
\end{proof}

We are now ready to prove the main result of this section.
	
\begin{proof}[Proof of Theorem~\ref{thm:DGerror}]
	We define the stochastic processes $e$, $\phi$ and $\psi$ via $e(t):=X(t)-X_h(t)$, $\phi(t):=(1-\cP_h)X(t)$ and $\psi(t):=\cP_hX(t)-X_h(t)$ for $t\in\bT$, so that $e(t) = \phi(t)+\psi(t)$.
	Theorem~\ref{thm:spatial_reg} and Lemma~\ref{lem:DG_interp} show that
	\begin{equation}\label{eq:phi-est}
		\sup_{t\in \bT}\|\phi(t)\|_{L^2(\gO;H)}\le C h^{\min(\gg,2)},
	\end{equation}
	and it thus remains to derive the corresponding bound for $\psi$.
	
	Lemma~\ref{lem:weak_conform} and \eqref{eq:ste_dg} yield together with $X_h(0)=\cP_hX_0$ for all $v_h\in H_h$ that 
	\bee
	\begin{split}
		(e(t),v_h)_H+\int_0^tB_h(\phi(s)+\psi(s),v_h)ds
		&=-(\phi(0),v_h)_H
		+\int_0^t(F(s,X(s))-F(s,X_h(s)),v_h)_Hds\\
		&\quad+\left(\int_0^tG(s,X(s))-G(s,X_h(s))dL(s),v_h\right)_H.
	\end{split}
	\eee
	We now follow the proof of \cite[Theorem 2.2]{CDG08} to show that $B_h(\phi(s),v_h)=0$. For any $s\in[0,T]$, it holds by~\eqref{eq:blf_discrete} that 
	\begin{align*}
		B_h(\phi(s),v_h) 
		&= (\phi(s), a\cdot\nabla v_h)_{L^2(\cK_h)} -
		\sum_{E\in\partial\cK_h\setminus\cD^+}(\vv n\cdot a \phi(s), v_h)_{L^2(E)} \\
		&= -\sum_{E\in\partial\cK_h\setminus\cD^+}(\vv n\cdot a \phi(s), v_h)_{L^2(E)} \\
		&= - \sum_{E\in\partial\cK_h\setminus\cD^+}\int_E a \phi^{-}(s)\cdot \dgj{v_h} dz \\
		&= - \sum_{E\in\partial\cK_h\setminus\cD^+}\int_E \phi(s) 
		a\cdot\left(\vv n^+v_h^+ + \vv n^-v_h^-\right) dz \\
		&=0. 
	\end{align*}
	The second and last identity follow by the defining properties of $\cP_h$ in~\eqref{eq:projection}, in the third and fourth step we have used the definition of the numerical flux in~\eqref{eq:num_flux}.
	Thus, we obtain the error equation
	\be\label{eq:psi}
	\begin{split}
		(\psi(t),v_h)_H
		&=(\phi(t)-\phi(0),v_h)_H-\int_0^tB_h(\psi(s),v_h)ds
		+\int_0^t(F(s,X(s))-F(s,X_h(s)),v_h)_Hds\\
		&\quad+\left(\int_0^tG(s,X(s))-G(s,X_h(s))dL(s),v_h\right)_H.
	\end{split}
	\ee
	It\^o's formula from Lemma~\ref{lem:ito-formula} yields for $Y=\psi$ and $\Psi(v):=\|v\|_H^2$ that 
	\bee
	\begin{split}
		\|\psi(t)\|_H^2
		&=(\phi(t)-\phi(0),\psi(t))_H-2\int_0^tB_h(\psi(s),\psi(s))ds
		+2\int_0^t(F(s,X(s))-F(s,X_h(s)),\psi(s))_Hds\\
		&\quad+2\left(\int_0^t[G(s,X(s))-G(s,X_h(s))]^*\psi(s), dL(s)\right)_\cU
		+\int_0^t\cI d[\psi,\psi]_s 
	\end{split}
	\eee
	where the operator $\cI\in\cL(H\widehat \otimes H, \bR)$ is given by $\cI(v\otimes w):=(v,w)_H$.
	Note that the "jump terms" in the second and third line in Eq.~\eqref{eq:ito_formula} vanish for the functional $\Psi(v)=\|v\|_H^2$.
	
	By Lemma~\ref{lem:blf_pos} we have that $B_h(\psi(s),\psi(s))\ge 0$, taking expectations thus yields  the estimate 
	\be\label{eq:psi-weak}
	\begin{split}
		\bE(\|\psi(t)\|_H^2)
		&\le\bE((\phi(t)-\phi(0),\psi(t))_H) \\
		&\quad +2\bE\left(\int_0^t(F(s,X(s))-F(s,X_h(s)),\psi(s))_Hds\right) 
		+\bE\left(\int_0^t\cI d[\psi,\psi]_s\right) \\
		&:=I+2II+III.
	\end{split}
	\ee
	The first term $I$ is bounded by Young's inequality and~\eqref{eq:phi-est} by
	\bee
		I\le \frac{1}{2}\bE(\|\phi(t)-\phi(0)\|_H^2+\|\psi(t)\|_H^2) 
		\le C h^{2\min(\gg,2)}+\frac{1}{2}\bE(\|\psi(t)\|_H^2).
	\eee
	To bound the second term $II$, we use Assumption~\ref{ass2}~\ref{item:Lipschitz2} and~\eqref{eq:phi-est} to obtain
	\begin{align*}
		II
		&\le \frac{1}{2} \int_0^t \bE(\|F(s,X(s))-F(s,X_h(s)\|_H^2) + \bE(\|\psi(s)\|_H^2) ds\\
		&\le C \int_0^t \bE(\|X(s)-X_h(s)\|_H^2) + \bE(\|\psi(s)\|_H^2) ds \\
		&\le C \int_0^t \bE(\|\phi(s)+\psi(s)\|_H^2) + \bE(\|\psi(s)\|_H^2) ds \\
		&\le C\left(h^{2\min(\gg,2)} + \int_0^t \bE(\|\psi(s)\|_H^2) ds\right). 
	\end{align*}
	For the final term $III$, we first observe that the quadratic variation term is given for any orthonormal basis $(f_i, i\in\bN)$ of $H$ by 
	\begin{align*}
		\int_0^t\cI d[\psi,\psi]_s 
		&= \int_0^t\cI d \left(\sum_{i,j\in\bN} f_i\otimes f_j \left[(\psi, f_i)_H, (\psi, f_j)_H \right]_s\right)\\
		&= \int_0^t\sum_{i,j\in\bN}^\infty (f_i, f_j) d \left[(\psi, f_i)_H, (\psi, f_j)_H \right]_s \\
		&= \sum_{i\in\bN} \left[(\psi, f_i)_H, (\psi, f_i)_H \right]_t. 
	\end{align*}
	Note that for all $i\in\bN$ there holds
	\begin{align*}
		(\psi, f_i)_H
		&=
		\left(\int_0^t[\cP_hG(s,X(s))-G(s,X_h(s))]^*f_i, dL(s) \right)_\cU \\
		&= 
		\sum_{k\in\bN}\int_0^t\left([\cP_hG(s,X(s))-G(s,X_h(s))]^*f_i, \sqrt{\eta_k} e_k \right)_\cU d\ell_k(s), 
	\end{align*}
	and all terms in the second line are centered and jointly uncorrelated random variables with respect $k$.
	As $(\sqrt{\eta_k}e_k,k\in\bN)$ is an orthonormal basis of $\cU$, we obtain by Parseval's identity
	\begin{align*}
		III &= \left(\int_0^t\cI d[\psi,\psi]_s\right) \\
		&=  \int_0^t \bE\left( \sum_{i\in\bN} 
		\left\|[\cP_hG(s,X(s))-G(s,X_h(s))]^*f_i\right\|_\cU^2 \right)ds\\
		&=  C \int_0^t \bE\left(
		\left\|[\cP_hG(s,X(s))-G(s,X_h(s))]\right\|_{\cL_{HS}(\cU;H)}^2 \right)ds\\
		&\le C \int_0^t \bE\left(
		\left\|[G(s,X(s))-\cP_hG(s,X(s))]\right\|_{\cL_{HS}(\cU;H)}^2 \right)
		+
		\bE\left(
		\left\|[G(s,X(s))-G(s,X_h(s))]\right\|_{\cL_{HS}(\cU;H)}^2 \right)ds\\
		&\le  C \int_0^t h^{2\min(\gg, 2)}\bE\left(\|G(s,X(s))\|_{\cL_{HS}(\cU;H^\gg)}^2 \right)
		+
		\bE\left(\|[X(s)-X_h(s)]\|_H^2 \right)ds,
	\end{align*}
	where the last estimate holds by Lemma~\ref{lem:DG_interp} and Assumption~\ref{ass2}~\ref{item:Lipschitz2}.
	By Item~\ref{item:lingrowthG} in Assumption~\ref{ass2} and Theorem~\ref{thm:spatial_reg}, we obtain further for some $\ga>1$ and $\gb\in(0,(\ga-1)/2\ga)$ that 
	\begin{align*}
		\bE\left(\|G(s,X(s))\|_{\cL_{HS}(\cU;H^\gg)}^2\right) 
		&=\sum_{k\in\bN} \eta_k\|G(s,X(s))e_k\|_H^2 \\
		&\le C(1+\bE(\|X(s)\|_{H^\gg}^2)\sum_{k\in\bN} \eta_k^{1-2\gb} \\
		&\le \sum_{k\in\bN} k^{-\ga(1-2\gb)} \\
		&< \infty.
	\end{align*}
	Consequently, by~\eqref{eq:phi-est} there holds
	\begin{align*}
		III \le C\left(h^{2\min(\gg,2)} + \int_0^t \bE(\|\psi(s)\|_H^2) ds\right).
	\end{align*}
	Substituting the estimates on $I-III$ back into~\eqref{eq:psi-weak} thus yields 
	\bee
		\bE(\|\psi(t)\|_H^2)
		\le C\left(h^{2\min(\gg,2)} + \int_0^t \bE(\|\psi(s)\|_H^2) ds\right), 
	\eee
	and the claim follows by Grönwall's inequality.
\end{proof}

%%%%%%%%%%%%%%%%%%%%%%%%%%%%%%%%%%%%%%%%%%%%%%%%%
\section{Temporal Discretization}\label{sec:time}
%%%%%%%%%%%%%%%%%%%%%%%%%%%%%%%%%%%%%%%%%%%%%%%%%

We use $(m+1)$ equidistant time points $0=t_0<\dots<t_m=T$ to discretize $\bT$, and define $\gD t:=T/m>0$, for $m\in\bN$. We  employ a \textit{backward Euler} (BE) approximation for the linear part of Eq.~\eqref{eq:ste_dg}, i.e.
\bee
\int_{t_{i-1}}^{t_i}B_h(X_h(s),v_h)ds\approx\gD t B_h(X_h(t_i),v_h),\quad i=1,\dots,m.
\eee
The nonlinear parts with respect to $F$ and $G$ are approximated by the forward differences
\be\label{eq:time_fw}
\begin{split}
	\int_{t_{i-1}}^{t_i}(F(s,X_h(s)),v_h)_Hds&\approx (F(t_{i-1},X_h(t_{i-1}))\gD t,v_h)_H,\\
	\left(\int_{t_{i-1}}^{t_i}G(s,X_h(s))dL(s),v_h\right)_H&\approx(G(t_{i-1},X_h(t_{i-1}))\gD L^{(i)},v_h)_H,
\end{split}
\ee
where $\gD L^{(i)}:=L(t_i)-L(t_{i-1})$.
As the stochastic integral on the left hand side in Eq.~\eqref{eq:time_fw} is an It\^o integral, it is crucial to use forward differences that preserve the martingale property of the driving noise.
For the nonlinearity $F$ on the other hand, we could have chosen a backward difference or midpoint rule, but with the scheme~\eqref{eq:time_fw} we avoid solving a nonlinear system in every time step without affecting the overall order of convergence.
The spatio-temporal-discrete version of the weak problem is then to find $(X_h^{(i)},i=0,\dots,m)\subset H_h$ such that $X^{(0)}=X_0$ and for any $v_h\in H_h$ and $i=1,\dots,m$
\be \label{eq:ste_weak_td}
(X_h^{(i)}-X_h^{(i-1)},v_h)_H+\gD tB_h(X_h^{(i)},v_h)=\gD t(F(t_{i-1},X_h^{(i-1)}),v_h)_H+(G(t_{i-1},X_h^{(i-1)})\gD L^{(i)},v_h)_H.
\ee
We further interpolate $X_h^{(\cdot)}$ linearly at the temporal nodes to obtain the continuous time approximation $\ol X_h:\bT\to H_h$ defined by
\begin{equation}\label{eq:ste_weak_int}
	\ol X_h(t):=X_h^{(i)}+\frac{t-t_i}{\gD t}(X_h^{(i+1)}-X_h^{(i)}),
	\quad t\in[t_i,t_{i+1}],\quad i=0,\dots, m-1.
\end{equation} 

Our main result of this section bounds the resulting temporal discretization error:

\begin{thm}\label{thm:time_error}
	Let Assumptions~\ref{ass2} and~\ref{ass:flow} hold such that $\gg=\min(\gg_0, \gg_F, \gg_G)\ge1$, let $\gD t\in(0,1]$, and denote by $X_h$ and $\ol X_h$ the solutions to Problem~\eqref{eq:ste_dg} and Problem~\eqref{eq:ste_weak_int}, respectively.
	Then, there is a constant $C>0$ such that for all $h\in\mathfrak H$ with $h^2\ge \gD t$ and $t\in\bT$ there holds
	\bee
	\sup_{t\in\bT}\|X_h(t)-\ol X_h(t)\|_{L^2(\gO; H)} \le C \gD t^{1/2}.
	\eee
\end{thm}

To prove Theorem~\ref{thm:time_error}, we first show mean-square continuity of the semi-discrete solution $X_h$. For this, we record in turn the well-known inverse estimate 
\begin{equation}\label{eq:inverse_est}
	\|v_h\|_{H^1(\cK_h)}+h^{-1/2}\|v_h\|_{L^2(\partial\cK_h)}\le C h^{-1} \|v_h\|_H, \quad v_h\in H_h, 
\end{equation}
where $C$ is independent of $h$ and $v_h$ (see, e.g., \cite[Remark 4.8/Corollary 4.24]{cangiani2022hp}).

\begin{lem}\label{lem:ms-cont-disc}
	Let Assumptions~\ref{ass2} and~\ref{ass:flow} hold such that $\gg=\min(\gg_0, \gg_F, \gg_G)\ge 1$, and denote by $X_h$ the solution to Problem~\eqref{eq:ste_dg}.
	Then, there is a constant $C>0$ such that for all $h\in\mathfrak H$ and $t,s\in\bT$ with $|t-s|\le 1$ there holds
	\begin{align*}
	\|X_h(t)-X_h(s)\|_{L^2(\gO; H)} \le C |t-s|^{1/2}
	\quad \text{and}\quad
	\|\bE(X_h(t)-X_h(s))\|_H \le C|t-s|.
	\end{align*}
\end{lem}

\begin{proof}
	We use again the processes $e$, $\phi$ and $\psi$ given by $e(t):=X(t)-X_h(t)$, $\phi(t):=(1-\cP_h)X(t)$ and $\psi(t):=\cP_hX(t)-X_h(t)$ for $t\in\bT$. 
	Further, without of loss of generality, we assume that $\gg\le 2$ and show mean-square continuity of $\psi$, the claim for $X_h$ then follows by the triangle inequality.
	
	Fix $t,s\in\bT$ such that $t> s$ and recall from~\eqref{eq:psi} that for $v_h\in V_h$ there holds
	\be\label{eq:psi2}
	\begin{split}
		(\psi(t)-\psi(s),v_h)_H
		&=(\phi(t)-\phi(s),v_h)_H-\int_s^tB_h(\psi(r),v_h)dr
		+\int_s^t(F(r,X(r))-F(r,X_h(r)),v_h)_Hdr\\
		&\quad+\left(\int_s^tG(r,X(r))-G(r,X_h(r))dL(r),v_h\right)_H.
	\end{split}
	\ee
	For the first part of the claim, we test against $v_h=\psi(t)-\psi(s)$ to obtain 
	\begin{align*}
		\|\psi(t)-\psi(s)\|_H^2
		&=(\phi(t)-\phi(s),\psi(t)-\psi(s))_H
		- \int_s^tB_h(\psi(r),\psi(t)-\psi(s))ds\\
		&\quad+\int_s^t(F(r,X(r))-F(r,X_h(r)),\psi(t)-\psi(s))_Hds\\
		&\quad+\left(\int_s^tG(r,X(r))-G(r,X_h(r))dL(r),\psi(t)-\psi(s)\right)_H\\
		&:=I+II+III+IV.
	\end{align*}
	The first term is bounded in expectation by Theorem~\ref{thm:ms_continuity} 
	\begin{align*}
		\bE(I)\le C \bE(\|(1-\cP_h)(X(t)-X(s))\|_H^2)^{1/2}\bE(\|\psi(t)-\psi(s)\|_H^2)^{1/2}
		\le C |t-s|^{1/2}\bE(\|\psi(t)-\psi(s)\|_H^2)^{1/2}.
	\end{align*}
	To bound the second term, we use~\eqref{eq:projection} together with the inverse estimate~\eqref{eq:inverse_est}, Lemma~\ref{lem:DG_interp}, and Theorem~\ref{thm:DGerror} to obtain 
	\begin{align*}
		\bE(II)
		&= \int_s^t\bE( (\psi(r),a\cdot \nabla (\psi(t)-\psi(s)))_{L^2(\cK_h)} )dr
		+ \int_s^t\sum_{E\in\partial\cK_h\setminus\cD^+} 
		\bE((\vv n\cdot a \psi(r),\psi(t)-\psi(s))_{L^2(E)})ds\\
		&\le C\left(\int_s^t h^{-1}\bE(\|\psi(r)\|_H^2)^{1/2}ds\right)
		\,\bE(\|\psi(t)-\psi(s)\|_H^2)^{1/2}\\
		&\le Ch^{\gg-1} |t-s|\,\bE(\|\psi(t)-\psi(s)\|_H^2)^{1/2}\\
		&\le C |t-s|\,\bE(\|\psi(t)-\psi(s)\|_H^2)^{1/2}.
	\end{align*}
	The last line follows since $h\le 1$ by Assumption~\ref{ass:flow} and with $\gg\ge 1$.
	For the last two terms $III$ and $IV$, we proceed as in the proof of Theorem~\ref{thm:DGerror} to obtain that   
	\begin{align*}
		\bE(III)&
		\le C \int_s^t\bE(\|\psi(r)\|_H^2))^{1/2}ds
		\,\bE(\|\psi(t)-\psi(s)\|_H^2)^{1/2} \\
		&\le Ch^\gg|t-s|\,\bE(\|\psi(t)-\psi(s)\|_H^2)^{1/2} \\
		&\le C|t-s|\,\bE(\|\psi(t)-\psi(s)\|_H^2)^{1/2}
	\end{align*}
	and
	\begin{align*}
		\bE(IV)&\le C \left(\int_s^t\bE(\|\psi(r)\|_H^2)ds\right)^{1/2}
		\,\bE(\|\psi(t)-\psi(s)\|_H^2)^{1/2} \\
		&\le Ch^\gg|t-s|^{1/2}\,\bE(\|\psi(t)-\psi(s)\|_H^2)^{1/2}\\
		&\le C|t-s|^{1/2}\,\bE(\|\psi(t)-\psi(s)\|_H^2)^{1/2}.
	\end{align*}
	As we have $|t-s|\le 1$, there holds 
	\begin{align*}
		\bE(\|\psi(t)-\psi(s)\|_H^2) \le C|t-s|^{1/2},
	\end{align*}
	and the first part of the claim follows with $e=\phi+\psi$, since
	\begin{equation*}
		\bE(\|X_h(t)-X_h(s)\|_H^2) 
		\le 
		\bE(\|X(t)-X(s)\|_H^2) + \bE(\|e(t)-e(s)\|_H^2)
		\le C|t-s| + \bE(\|\psi(t)-\psi(s)\|_H^2).
	\end{equation*}

	To proof the second part of the claim, we first take expectations in~\eqref{eq:psi2}, so that the stochastic integral vanishes, and then test against $v_h=\bE(X_h(t)-X_h(s))$. 
	With the Cauchy-Schwarz inequality, Theorem~\ref{thm:spatial_reg} and similar arguments as for the first part, we arrive at 
	\begin{align*}
		\|\bE(\psi(t)-\psi(s))\|_H^2
		&= \left(\bE(\phi(t)-\phi(s)), \bE(\psi(t)-\psi(s))\right)_H
		- \int_s^tB_h(\bE(\psi(r)),\bE(\psi(t)-\psi(s)))dr \\
		&\quad+\int_s^t(\bE(F(r,X(r))-F(r,X_h(r))),\bE(\psi(t)-\psi(s)))_Hdr\\
		&\le C\left(|t-s|+h^{\gg-1} |t-s|+h^\gg|t-s|\right)\,\|\bE(\psi(t)-\psi(s))\|_H\\
		&\le C|t-s|\|\bE(\psi(t)-\psi(s))\|_H,
	\end{align*}
	and the second part of the claim follows again by the triangle inequality.
\end{proof}

\begin{proof}[Proof of Theorem~\ref{thm:time_error}]
	Since $\ol X_h$ is a convex combination of $X_h^{(\cdot)}$ in each sub-interval, it suffices to bound the error at the temporal grid points. 
	We assume again that $\gg\le 2$ for convenience.
	Let $\phi^{(i)}:=X_h(t_i)-X_h^{(i)}$ for $i\in\{0, 1,\dots,m\}$ and observe that 
	$\phi^{0}=0$. 
	By Eqs.~\eqref{eq:ste_dg} and \eqref{eq:ste_weak_td} it holds for all $v_h\in H_h$ that
	\be
	\begin{split}\label{eq:time_error1}
		(\phi^{(i)}-\phi^{(i-1)},v_h)_H+\gD t B_h(\phi^{(i)},v_h)
		&= \int_{t_{i-1}}^{t_i}B_h(X_h(t_i)-X_h(s),v_h)ds\\
		&\quad+\int_{t_{i-1}}^{t_i}(F(s,X_h(s))-F(t_{i-1},X_h^{(i-1)}),v_h)_Hds\\
		&\quad+\left(\int_{t_{i-1}}^{t_i}G(s,X_h(s))-G(t_{i-1},X_h^{(i-1)})dL(s),v_h\right)_H.\\
	\end{split}
	\ee
	Testing against $v_h=\phi^{(i)}\in H_h$ in Eq.~\eqref{eq:time_error1} yields
	\be\label{eq:aux3}
	\begin{split}
		&\frac{1}{2}\left(\|\phi^{(i)}\|_H^2-\|\phi^{(i-1)}\|_H^2
		+ \|\phi^{(i)}-\phi^{(i-1)}\|_H^2 \right) 
		+ \gD t B_h(\phi^{(i)},\phi^{(i)}) \\
		&= 
		\int_{t_{i-1}}^{t_i}B_h(X_h(t_i)-X_h(s),\phi^{(i)})ds
		+\left(\int_{t_{i-1}}^{t_i}F(s,X_h(s))-F(t_{i-1},X_h^{(i-1)})ds,\phi^{(i)}\right)_H\\
		&\quad+\left(\int_{t_{i-1}}^{t_i}G(s,X_h(s))-G(t_{i-1},X_h^{(i-1)})dL(s),\phi^{(i)}\right)_H \\
		&=:I+II+III.
	\end{split}
	\ee
	
	We split the first term $I$ via
	\begin{align*}
		I
		&= \int_{t_{i-1}}^{t_i}(X_h(t_i)-X_h(s),a\cdot \nabla \phi^{(i)})_{L^2(\cK_h)}ds
		+ \int_{t_{i-1}}^{t_i}\sum_{E\in\partial\cK_h\setminus\cD^+} 
		\int_E a (X_h(t_i)-X_h(s))^-\cdot\dgj{\phi^{(i)}} dz\, ds\\
		&= \int_{t_{i-1}}^{t_i}(X_h(t_i)-X_h(s),
		a\cdot \nabla \phi^{(i)}-a\cdot \nabla\phi^{(i-1)})_{L^2(\cK_h)} ds\\
		&\quad+\int_{t_{i-1}}^{t_i}(X_h(t_i)-X_h(s),a\cdot \nabla \phi^{(i-1)})_{L^2(\cK_h)}ds\\
%		&\quad + \int_{t_{i-1}}^{t_i}\sum_{E\in\partial\cK_h\setminus\cD^+} 
%		\int_E a (X_h(t_i)-X_h(s))^-\cdot(\dgj{\phi^{(i)}}-\dgj{\phi^{(i-1)}}) dz\, ds\\
		&\quad+ \int_{t_{i-1}}^{t_i}\sum_{E\in\partial\cK_h\setminus\cD^+} 
		\int_E a (X_h(t_i)-X_h(s))^-\cdot\dgj{\phi^{(i)}} dz\, ds\\
		&=:I_a+I_b+I_c.
	\end{align*}
	To bound $I_a$ in expectation, we use the inverse estimate~\eqref{eq:inverse_est}, Young's inequality, and Lemma~\ref{lem:ms-cont-disc} to obtain
	\begin{align*}
		\bE(I_a) 
		&\le C\gD th^{-2}
		\int_{t_{i-1}}^{t_i} \bE(\|X_h(t_i)-X_h(s)\|_H^2)ds
		+ \frac{1}{8}\bE(\|\phi^{(i)}-\phi^{(i-1)}\|_H^2)\\
		&\le C\gD th^{-2}\left( \int_{t_{i-1}}^{t_i} |t_i-s| ds\right) + \frac{1}{8}\bE(\|\phi^{(i)}-\phi^{(i-1)}\|_H^2) \\
		&\le C\gD t h^{-2}\gD t^2 +  \frac{1}{8}\bE(\|\phi^{(i)}-\phi^{(i-1)}\|_H^2).
	\end{align*}
	To bound $I_b$, we use that $X_h(t)-X(s)$ and $\phi^{(i-1)}$ are independent to obtain
	\begin{align*}
		\bE(I_b)
		&= \int_{t_{i-1}}^{t_i}\left(\bE(X_h(t_i)-X_h(s)),\,\bE(a\cdot \nabla \phi^{(i-1)})\right)_H ds\\
		&\le C\int_{t_{i-1}}^{t_i}h^{-1}\|\bE(X_h(t_i)-X_h(s))\|_H
		\bE(\|\phi^{(i-1)}\|_H^2)^{1/2}ds\\
		&\le C\left(h^{-2}\int_{t_{i-1}}^{t_i} |t_i-s|^2 ds 
		+\gD t \bE(\|\phi^{(i-1)}\|_H^2)\right) \\
		&\le C\gD t \left(h^{-2}\gD t^2 + \bE(\|\phi^{(i-1)}\|_H^2)\right),
	\end{align*}
	where we have again used~\eqref{eq:inverse_est} in the second step and Lemma~\ref{lem:ms-cont-disc} for the third line.
	For the term $I_c$, we then also use the splitting $\phi^{(i)}=(\phi^{(i)}-\phi^{(i-1)})+\phi^{(i-1)}$ together with the inverse estimate~\eqref{eq:inverse_est} and Lemma~\ref{lem:ms-cont-disc} to obtain by similar calculations as for $I_a$ and $I_b$ that  
	\begin{align*}
		\bE(I_c)\le 
		C\gD t \left(h^{-2}\gD t^2 + \bE(\|\phi^{(i-1)}\|_H^2)\right)
		+ \frac{1}{8}\bE(\|\phi^{(i)}-\phi^{(i-1)}\|_H^2).
	\end{align*}
%	As the inverse estimate also applies to the boundary terms in $I_c$ and $I_d$, there holds for instance
%	\begin{align*}
%		\bE(I_c)
%		&\le  C\int_{t_{i-1}}^{t_i}
%		\bE(\|X_h(t_i)-X_h(s)\|_{L^2(\partial\cK_h)}^2)^{1/2}
%		\bE(\|\phi^{(i)}-\phi^{(i-1)}\|_{L^2(\partial\cK_h)}^2)^{1/2} ds \\
%		&\le 
%		C\int_{t_{i-1}}^{t_i}h^{-1}\bE(\|X_h(t_i)-X_h(s)\|_H^2)^{1/2}
%		\bE(\|\phi^{(i)}-\phi^{(i-1)}\|_H^2)^{1/2}ds,
%	\end{align*}
%	we obtain analogously that 
%	\begin{equation*}
%		\bE(I_c) \le C\gD t (h^{-2}\gD t^2 + h^{2(\gg-1)}\gD t^3) + \frac{1}{8}\bE(\|\phi^{(i)}-\phi^{(i-1)}\|_H^2),
%	\end{equation*}
%	and
%	\begin{equation*}
%		\bE(I_d)\le 
%		C\gD t (h^{-2}\gD t^{2\min(1,\gg)} + h^{2(\gg - 1)}\gD t^2)
%		+C\gD t \bE(\|\phi^{(i-1)}\|_H^2).
%	\end{equation*}
	Collecting the estimates for $I_a-I_c$ and using that $h^2\ge \gD t$ then shows 
	\begin{align*}
		\bE(I)&\le
		C\gD t \left(\gD t + \bE(\|\phi^{(i-1)}\|_H^2)\right)
		+ \frac{1}{8}\bE(\|\phi^{(i)}-\phi^{(i-1)}\|_H^2).
	\end{align*}

	The term $II$ is estimated by Young's inequality, Assumption~\ref{ass2}~\ref{item:Lipschitz2} and Lemma~\ref{lem:ms-cont-disc} via
	\bee
	\begin{split}
		\bE(II)
		&=\bE\left(\int_{t_{i-1}}^{t_i}(F(s,X_h(s))-F(t_{i-1},X_h(s)),\phi^{(i)})_Hds\right) \\
		&\quad + \bE\left(\int_{t_{i-1}}^{t_i}(F(t_{i-1},X_h(s))-F(t_{i-1},X_h(t_{i-1})),\phi^{(i)})_Hds\right)\\
		&\quad + \bE\left(\int_{t_{i-1}}^{t_i}(F(t_{i-1},X_h(t_{i-1}))-F(t_{i-1},X_h^{(i-1)}),\phi^{(i)})_Hds\right) \\
		&\le C\int_{t_{i-1}}^{t_i}\bE(\|F(s,X_h(s))-F(t_{i-1},X_h(s))\|_H^2) ds\\
		&\quad+
		C\int_{t_{i-1}}^{t_i}\bE(\|F(t_{i-1},X_h(s))-F(t_{i-1},X_h(t_{i-1}))\|_H^2) ds\\
		&\quad+
		C\int_{t_{i-1}}^{t_i}\bE(\|F(t_{i-1},X_h(t_{i-1}))-F(t_{i-1},X_h^{(i-1)})\|_H^2) ds
		+
		\gD t\bE(\|\phi^{(i)}\|_H^2)\\
		&\le C\left(\gD t^2 + \int_{t_{i-1}}^{t_i} \bE(\|X_h(s)-X_h(t_{i-1})\|_H^2)ds
		+\gD t \bE(\|\phi^{(i)}\|_H^2) \right)\\
		&\le C\gD t \left(\gD t +\bE(\|\phi^{(i)}\|_H^2) \right).
	\end{split}
	\eee
	For the last term $III$, we use independence of $X_h(t)-X_h(s)$ and $\phi^{(i-1)}$ together with the Cauchy-Schwarz/Young's inequality and It\^o's isometry to obtain similarly
	\bee
	\begin{split}
		\bE(III)
		&=\bE\left(\left(
		\int_{t_{i-1}}^{t_i}G(s,X_h(s))-G(t_{i-1},X_h(s))dL(s),
		\phi^{(i)}-\phi^{(i-1)}\right)_H\right) \\
		&\quad + \bE\left(\left(
		\int_{t_{i-1}}^{t_i}G(t_{i-1},X_h(s))-G(t_{i-1},X_h(t_{i-1}))dL(s),
		\phi^{(i)}-\phi^{(i-1)}\right)_H\right)\\
		&\quad + \bE\left(\left(
		\int_{t_{i-1}}^{t_i}G(t_{i-1},X_h(t_{i-1}))-G(t_{i-1},X_h^{(i-1)})dL(s),
		\phi^{(i)}-\phi^{(i-1)}\right)_H\right) \\
		&\le 
		C\int_{t_{i-1}}^{t_i}
		\bE(\|G(s,X_h(s))-G(t_{i-1},X_h(s))\|_{\LHS(\cU, H)}^2) ds\\
		&\quad+
		C\int_{t_{i-1}}^{t_i}
		\bE(\|G(t_{i-1},X_h(s))-G(t_{i-1},X_h(t_{i-1}))\|_{\LHS(\cU, H)}^2) ds\\
		&\quad+
		C\int_{t_{i-1}}^{t_i}
		\bE(\|G(t_{i-1},X_h(t_{i-1}))-G(t_{i-1},X_h^{(i-1)})\|_{\LHS(\cU, H)}^2) ds
		+ \frac{1}{4}\bE(\|\phi^{(i)}-\phi^{(i-1)}\|_H^2)\\
		&\le C\gD t^2 + \frac{1}{4}\bE(\|\phi^{(i)}-\phi^{(i-1)}\|_H^2).
	\end{split}
	\eee
	
	Substituting all estimates in~\eqref{eq:aux3}, regrouping some terms and summing with respect to $i$ thus yields with $\|\phi^{(i-1)}\|_H=0$ that
	\begin{align*}
		\frac{1}{2}\bE(\|\phi^{(i)}\|_H^2) + \gD t \sum_{j=1}^iB_h(\phi^{(j)},\phi^{(j)}) 
		&\le 
		C\gD t\left(
		\gD t  + \sum_{j=1}^i\bE(\|\phi^{(j)}\|_H^2) 
		\right).
	\end{align*}
	The claim then follows by the discrete Grönwall inequality and Lemma~\ref{lem:blf_pos}.
\end{proof}

 \begin{rem}\label{rem:error-balancing}
 	For $\gg\ge 1$, the condition $h^2\ge \gD t$ is satisfied by setting $\gD t = h^{2\gg}$, which naturally yields the overall error equilibration 
	\be\label{eq:time-space-error}
	\sup_{t\in\bT}\|X(t)-\ol X_h(t)\|_{L^2(\gO; H)} 
	\le C \left(\gD t^{1/2} + h^{\min(\gg,2)}\right)
	= C h^{\min(\gg,2)}.
	\ee
	
	For the case that $\gg\in(1/2,1)$ such an error balancing is not straightforward in view of the proof Theorem~\ref{thm:time_error}. However, our numerical results in Section~\ref{sec:numerics} suggest that an overall error of order $\cO(h^\gg)$ may also be recovered when $\gg\in(1/2,1)$ with the parameters $\gD t = h^{2\gg}$.
	On the other hand, a suitable extension of Theorem~\ref{thm:time_error} to the case $\gg<1$ is far from obvious, and requires a more involved analysis. In order to keep the length of this manuscript reasonable, we leave this as a subject for future work. 
\end{rem}

%%%%%%%%%%%%%%%%%%%%%%%%%%%%%%%%%%%%%%%%%%%%%%%%%%
\section{Noise Approximation}\label{sec:noise}
%%%%%%%%%%%%%%%%%%%%%%%%%%%%%%%%%%%%%%%%%%%%%%%%%%
After discretizing the temporal and spatial domain of Problem~\eqref{eq:ste_weak}, it is in general necessary to derive a numerically tractable approximation of the infinite-dimensional driving noise $L$.
For this, we will utilize a series representation of $L$ and truncate the expansion after a finite number of terms.
Since the covariance operator $Q$ of $L$ is symmetric and of trace class, $L$ admits the \textit{\KL expansion}
\be\label{eq:L_KL}
L(t)=\sum_{k\in\bN} (L(t),e_k)_Ue_k,\quad t\in\bT.
\ee
The scalar products $(L(\cdot),e_k)_H$ are one-dimensional \textit{uncorrelated, but not independent}, L\'evy processes with zero mean and variance $\eta_k$ (see \cite{PZ07}).
In general, infinitely many of the eigenvalues $\eta_k$ are strictly greater than zero, hence we truncate the series in Eq.~\eqref{eq:L_KL} after $N\in\bN$ terms to obtain
\bee
L_N(t):=\sum_{k=1}^N (L(t),e_k)_Ue_k,\quad t\in\bT.
\eee
It can be shown, see for example \cite{BS18a}, that $L_N$ converges to $L$ in mean-square uniformly on $\bT$ with the truncation error bounded by
\bee
\bE(\|L_N(t)-L(t)\|_U^2)\le T\sum_{k>N}\eta_k,\quad t\in\bT.
\eee
When simulating $L_N$, it is vital to generate $(L(\cdot),e_1),\dots,(L(\cdot),e_n)$ as uncorrelated, but stochastically dependent L\'evy processes for fixed $N$.
Besides the truncation, another bias may occur when sampling the one-dimensional processes $((L(t),e_k)_H,t\in\bT)$.
For $\eta_k>0$, consider the normalized processes
\be\label{eq:1d_marginal}
\ell_k=(\ell_k(t),t\in\bT):=\frac{((L(t),e_k)_U,t\in\bT)}{\sqrt{\eta_k}},
\ee
with unit variance at $t=1$. Denoting by $\stackrel{\cL}{=}$ equality in distribution,
the identity
\bee
L_N(t)\stackrel{\cL}{=}\sum_{k=1}^N \sqrt{\eta_k}\ell_k(t)e_k
\eee
holds with respect to probability law of $L_N(t)$.
For a general one-dimensional L\'evy process $\ell_k$, it is not possible to sample from the exact distribution of $\ell_k(t)$ for arbitrary $t\in\bT$.
There are a few important exceptions, for instance normal-inverse Gaussian (NIG) or variance Gamma (VG) processes (see \cite{S03}), in any other case, however, one is forced to use approximate simulation algorithms.
The most popular technique is the compound Poisson approximation (CPA), see for instance \cite{AR01,DHP12,F11,R97}, which usually guarantees weak convergence. A drawback of the CPA methods is that it requires rather strong assumptions on the one-dimensional L\'evy processes $\ell_k$ to bound the approximation error in a mean-square sense and is difficult to implement.
Another approach is to use the Fourier inversion (FI) technique introduced in \cite{BS18a}, which ensures a strong error control in $L^p(\gO;\bR)$ under relatively weak assumptions on $\ell_k$. With the FI method, we are able to approximate very general types of L\'evy noise and control the mean-squared error, for instance if $L$ stems from the important class of \textit{generalized hyperbolic} (GH) L\'evy processes introduced in \cite{BN77,BN78}.
To allow for arbitrary approximation techniques, we formulate the following assumption.
\begin{assumption}\label{ass:noise}
	Let $\widetilde\ell_k$ be arbitrary approximations of $\ell_k$ (based on CPA, FI, etc.) such that the processes $(\widetilde\ell_k,k\in\bN)$ are jointly uncorrelated, but stochastically dependent, and let
	\bee
	\widetilde L_N(t):=\sum_{k=1}^N\sqrt{\eta_k}\widetilde\ell_k(t)e_k
	\eee
	be the approximated $U$-valued L\'evy field. There is a constant $\eps_L> 0$ such that for all $k\in\bN$ and $t\in\bT$
	\bee
	\bE(|\widetilde\ell_k(t)-\ell_k(t)|^2)\le \eps_L.
	\eee
\end{assumption}

\begin{rem}
	Assumption~\ref{ass:noise} yields that the overall noise approximation error is bounded by
	\be\label{eq:noise_error}
	\bE(\|L(t)-\widetilde L_N(t)\|_U^2)\le T\sum_{k>N}\eta_k+\eps_L\sum_{k=1}^N\eta_k,\quad t\in\bT,
	\ee
	hence there is a separation between the truncation error with respect to $N$ and the simulation bias $\eps_L$.
	Often, an arbitrary small error $\eps_L$ may be achieved with sufficient computational effort and it is possible to reduce the noise approximation error in Eq.~\eqref{eq:noise_error} to any desired amount by increasing the number of terms in the expansion and decreasing $\eps_L$. This is for instance the case for GH L\'evy fields approximated by FI as in \cite{BS18a}.
	Moreover, we achieve an equilibration between both types of errors in the sense that
	\bee
	\bE(\|L(t)-L_N(t)\|_U^2)=T\sum_{k>N}\eta_k\approx\eps_L\sum_{k=1}^N\eta_k=\bE(\|L_N(t)-\widetilde L_N(t)\|_U^2).
	\eee
\end{rem}

Substituting $L$ by $\widetilde L_N$ in Eq.~\eqref{eq:ste_weak_td} yields the fully discrete problem to find $(\widetilde X^{(i)}_{h,N},i=1,\dots,m)\subset H_h$ such that for all $v_h\in H_h$ and $i=1,\dots,m$ it holds
\be\label{eq:ste_weak_fd}
\begin{split}
	(\widetilde X^{(i)}_{h,N}-\widetilde X_{h,N}^{(i-1)},v_h)_H+\gD tB_h((\widetilde X_{h,N}^{(i)},v_h)&=(F(t_{i-1},\widetilde X	
	_{h,N}^{(i-1)})\gD t,v_h)_H\\
	&\quad+\left(G(t_{i-1},\widetilde X_{h,N}^{(i-1)})\gD \widetilde L_N^{(i)},v_h\right)_H,
\end{split}
\ee
where $\widetilde X^{(0)}_{h,N}:=X_h^{(0)}=\cP_hX_0$ and $\gD\widetilde L_N^{(i)}:=\widetilde L_N(t_i)-\widetilde L_N(t_{i-1})$.
To complete the error analysis, we derive the overall approximation error between $\widetilde X_{h,N}^{(\cdot)}$ and the unbiased weak solution $X$ to Eq.~\eqref{eq:ste_weak}.

\begin{thm}\label{thm:overall_error}
	Let Assumption~\ref{ass2} hold with $\gg:=\min(\gg_0, \gg_F, \gg_G)\ge 1$, let Assumption~\ref{ass:noise} hold, and denote by $X$ and $\widetilde X^{(\cdot)}_{h,N}$ the solutions to Problem~\eqref{eq:ste_weak} and Problem~\eqref{eq:ste_weak_fd}, respectively.
	Then, there is a $C>0$ such that for any $\gD t\in(0,1]$ with $\gD t\le h^2$ and $i=1,\dots, m$ it holds that
	\bee
	\|X(t_i)-\widetilde X_{h,N}^{(i)}\|_{L^2(\gO;H)}
	\le C \left(h^{\min(\gg,2)}+\gD t^{1/2}  +\left(\sum_{k>N}\eta_k+\eps_L\sum_{k=1}^N\eta_k\right)^{1/2}\right).
	\eee
\end{thm}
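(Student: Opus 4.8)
The plan is to insert an intermediate approximation between the unbiased solution $X$ and the fully discrete one $\widetilde X_{h,N}^{(\cdot)}$, namely the solution $X_h^{(\cdot)}$ of Problem~\eqref{eq:ste_weak_sd} driven by the \emph{exact} noise $L$, and then apply the triangle inequality
\bee
\bE(\|X(t_i)-\widetilde X_{h,N}^{(i)}\|_{H,h}^2)
\le 2\,\bE(\|X(t_i)-X_h^{(i)}\|_{H,h}^2) + 2\,\bE(\|X_h^{(i)}-\widetilde X_{h,N}^{(i)}\|_{H,h}^2).
\eee
The first term is exactly the content of Theorem~\ref{thm:DGerror} and contributes $C(\gD t + h^{2\min(\gg,2)})$. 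Hence the whole task reduces to estimating the noise-perturbation error $\bE(\|X_h^{(i)}-\widetilde X_{h,N}^{(i)}\|_{H,h}^2)$, and showing it is bounded by $C(\sum_{k>N}\eta_k+\eps_L\sum_{k=1}^N\eta_k)$.

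To do this I would set $\chi^{(i)}:=X_h^{(i)}-\widetilde X_{h,N}^{(i)}\in H_h$, subtract Eq.~\eqref{eq:ste_weak_fd} from Eq.~\eqref{eq:ste_weak_sd}, and iterate over the time steps exactly as in the proof of Theorem~\ref{thm:time_error}. Since both discrete solutions start from $P_hX_0$, one has $\chi^{(0)}=0$, and after telescoping one obtains, for all $v_h\in V_h$,
\begin{align*}
(\chi^{(i)},v_h)_{H,h}+\gD t B_h(\chi^{(i)},v_h)
&=-\sum_{j=2}^i\gD t B_h(\chi^{(j-1)},v_h)
+\sum_{j=1}^i\gD t\,(F(t_{j-1},X_h^{(j-1)})-F(t_{j-1},\widetilde X_{h,N}^{(j-1)}),v_h)_{H,h}\\
&\quad+\sum_{j=1}^i\Big(G(t_{j-1},X_h^{(j-1)})\gD L^{(j)}-G(t_{j-1},\widetilde X_{h,N}^{(j-1)})\gD\widetilde L_N^{(j)},v_h\Big)_{H,h}.
\end{align*}
Applying the discrete inf-sup condition of Lemma~\ref{lem:infsupdiscrete} together with the continuity bound~\eqref{eq:Bhcontinuity} gives $c(1-\gD t)\|\chi^{(i)}\|_{H,h}$ bounded by the $V'$-norms of the three sums on the right. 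The $B_h$-sum and the $F$-sum are handled verbatim as in Theorem~\ref{thm:time_error}, using the Lipschitz property of $F$ from Assumption~\ref{ass2}\ref{item:Lipschitz2} and producing terms of the form $C\gD t\sum_{j}\bE(\|\chi^{(j-1)}\|_{H,h}^2)$, which will be absorbed by the discrete Grönwall inequality at the end.

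The main obstacle is the stochastic term. I would split the increment difference as
\bee
G(t_{j-1},X_h^{(j-1)})\gD L^{(j)}-G(t_{j-1},\widetilde X_{h,N}^{(j-1)})\gD\widetilde L_N^{(j)}
=G(t_{j-1},X_h^{(j-1)})(\gD L^{(j)}-\gD\widetilde L_N^{(j)})
+\big(G(t_{j-1},X_h^{(j-1)})-G(t_{j-1},\widetilde X_{h,N}^{(j-1)})\big)\gD\widetilde L_N^{(j)}.
\eee
Because $L$ and $\widetilde L_N$ have independent, mean-zero increments and the integrands are adapted, the sums over $j$ of the two pieces are sums of martingale differences, so cross-terms vanish and one may apply the It\^o-type isometry of Lemma~\ref{lem:isometry} (applied to $\widehat H=V'$) term by term. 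The second piece contributes, via the Lipschitz bound on $G$ and $\bE(\|\gD\widetilde L_N^{(j)}\|_{\cU}^2)\le C\gD t$, another Grönwall term $C\gD t\sum_j\bE(\|\chi^{(j-1)}\|_{H,h}^2)$. The first piece is the genuinely new contribution: using the linear growth of $G$, the boundedness of $\sup_t\|X_h^{(j-1)}\|_{L^2(\gO;H)}$ (which follows from Theorem~\ref{thm:DGerror} combined with Theorem~\ref{thm:spatial_reg}), and the noise error bound from Eq.~\eqref{eq:noise_error}, namely $\bE(\|\gD L^{(j)}-\gD\widetilde L_N^{(j)}\|_{\cU}^2)\le C\gD t\,(\sum_{k>N}\eta_k+\eps_L\sum_{k=1}^N\eta_k)$ (obtained by writing the increment difference in the eigenbasis and using Assumption~\ref{ass:noise}), the sum over $j=1,\dots,i$ produces the bound $C(\sum_{k>N}\eta_k+\eps_L\sum_{k=1}^N\eta_k)$ since $i\gD t\le T$. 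Collecting all estimates yields
\bee
\bE(\|\chi^{(i)}\|_{H,h}^2)\le C\Big(\sum_{k>N}\eta_k+\eps_L\sum_{k=1}^N\eta_k\Big)+C\gD t\sum_{j=1}^i\bE(\|\chi^{(j-1)}\|_{H,h}^2),
\eee
and the discrete Grönwall inequality gives $\bE(\|\chi^{(i)}\|_{H,h}^2)\le Ce^{CT}(\sum_{k>N}\eta_k+\eps_L\sum_{k=1}^N\eta_k)$. Combining this with Theorem~\ref{thm:DGerror} through the triangle inequality above completes the proof. The one point requiring care is verifying that the uncorrelatedness-but-dependence of the $\widetilde\ell_k$ (Assumption~\ref{ass:noise}) is enough to keep the isometry argument valid for the increments of $\widetilde L_N$; this works because the isometry only uses the covariance structure, and the increments of $\widetilde L_N$ over disjoint time intervals are still independent and mean-zero as $\widetilde L_N$ is built from L\'evy processes $\widetilde\ell_k$.
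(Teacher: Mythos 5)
Your proposal is correct and follows essentially the same route as the paper: the paper also introduces $X_h^{(i)}$ as an intermediate quantity, estimates $\widetilde\psi_N^{(i)}:=X_h^{(i)}-\widetilde X_{h,N}^{(i)}$ via the telescoped difference of Eqs.~\eqref{eq:ste_weak_sd} and~\eqref{eq:ste_weak_fd}, Lemma~\ref{lem:infsupdiscrete}, the same splitting of the stochastic increment, independence of the noise increments from the past iterates, the noise bound~\eqref{eq:noise_error}, and the discrete Gr\"onwall inequality, before concluding with the triangle inequality and Theorem~\ref{thm:DGerror}. Your closing remark about the increments of $\widetilde L_N$ being mean-zero and independent of the previous iterates is precisely the observation the paper uses to factor the expectations in the term $III$.
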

\begin{proof}
	We define $\widetilde\psi_N^{(i)}:=X_h^{(i)}-\widetilde X_{h,N}^{(i)}\in H_h$ for $i=1,\dots,m$, and obtain by in Eqs.~\eqref{eq:ste_weak_td} and~\eqref{eq:ste_weak_fd} that
	\begin{align*}
		(\widetilde\psi_N^{(i)}-\widetilde\psi_N^{(i-1)},v_h)_H
		+\gD t B_h(\widetilde\psi_N^{(i)},v_h)
		&=\gD t(F(t_{i-1},X_h^{(i-1)})-F(t_{i-1},\widetilde X_{h,N}^{(i-1)}),v_h)_Hds\\
		&\quad+\left(G(t_{i-1},X_h^{(i-1)})\gD L_i-G(t_{i-1}\widetilde X_{h,N}^{(i-1)}) \gD \widetilde L_i,v_h\right)_H\\
		&=\gD t(F(t_{i-1},X_h^{(i-1)})-F(t_{i-1},\widetilde X_{h,N}^{(i-1)}),v_h)_Hds\\
		&\quad+\left(G(t_{i-1},X_h^{(i-1)})\gD L_i
		-G(t_{i-1} X_{h,N}^{(i-1)}) \widetilde \gD L_i,v_h\right)_H\\
		&\quad+\left( G(t_{i-1},X_{h,N}^{(i-1)})\widetilde \gD L_i
		-G(t_{i-1}\widetilde X_{h,N}^{(i-1)})\gD \widetilde L_i,v_h\right)_H.
	\end{align*}
	Testing against $v_h=\widetilde\psi_N^{(i)}$, taking expectations and using that $\widetilde\psi_N^{(i-1)}$ is independent of $\gD L_i$ and $\gD \widetilde L_i$ then yields with Young's inequality that
	\begin{align*}
		&\frac{1}{2}\left(
		\bE(\|\widetilde\psi_N^{(i)}\|_H^2)-\bE(\|\widetilde\psi_N^{(i-1)}\|_H^2)
		+
		\bE(\|\widetilde\psi_N^{(i)}-\widetilde\psi_N^{(i-1)}\|_H^2)
		\right)
		+\gD t B_h(\widetilde\psi_N^{(i)},\widetilde\psi_N^{(i)})\\
		&=\gD t\bE\left((F(t_{i-1},X_h^{(i-1)})-F(t_{i-1},\widetilde X_{h,N}^{(i-1)}),\widetilde\psi_N^{(i)})_H\right)ds\\
		&\quad+\bE\left(\left(G(t_{i-1},X_h^{(i-1)})\gD L_i
		-G(t_{i-1} X_{h,N}^{(i-1)}) \widetilde \gD L_i,\widetilde\psi_N^{(i)}-\widetilde\psi_N^{(i-1)}\right)_H\right)\\
		&\quad+\bE\left(\left( G(t_{i-1},X_{h,N}^{(i-1)})\widetilde \gD L_i
		-G(t_{i-1}\widetilde X_{h,N}^{(i-1)})\gD \widetilde L_i,\widetilde\psi_N^{(i)}-\widetilde\psi_N^{(i-1)}\right)_H\right)\\
		&\le \frac{\gD t}{2} \bE(\|F(t_{i-1},X_h^{(i-1)})-F(t_{i-1},\widetilde X_{h,N}^{(i-1)})\|_H^2) +\frac{\gD t}{2}\bE(\|\widetilde\psi_N^{(i)}\|_H^2)\\
		&\quad+ \bE(\|G(t_{i-1},X_h^{(i-1)})(\gD L_i - \widetilde \gD L_i)\|_H^2) \\
		&\quad+ \bE(\|(G(t_{i-1},X_h^{(i-1)})-G(t_{i-1},\widetilde X_h^{(i-1)}))\widetilde \gD L_i\|_H^2) 
		+\frac{1}{2}\bE(\|\widetilde\psi_N^{(i)}-\widetilde\psi_N^{(i-1)}\|_H^2).
	\end{align*}
	Assumption~\ref{ass:noise} and the It\^o isometry yield for the "middle" term that 
	\begin{align*}
		\bE(\|G(t_{i-1},X_h^{(i-1)})(\gD L_i - \widetilde \gD L_i)\|_H^2) 
		&=
		\bE(\|G(t_{i-1},X_h^{(i-1)})\|_{\LHS(\cU, H)})
		\bE(\|\gD L_i - \widetilde \gD L_i\|_U^2) \\
		&\le C \gD t \left(\sum_{k>N}\eta_k+\eps_L\sum_{k=1}^N\eta_k\right).
	\end{align*}
	By analogous arguments as in the proof of Theorem~\ref{thm:time_error} and the discrete Grönwall inequality, we obtain
	\begin{align*}		
		\bE(\|\widetilde\psi_N^{(i)}\|_H^2)
		\le C \left(\sum_{k>N}\eta_k+\eps_L\sum_{k=1}^N\eta_k\right), 
	\end{align*}
	where $C>0$ is independent of $i, N$ and $\eps_L$.
	The claim then follows as an immediate consequence of Remark~\ref{rem:error-balancing} and the triangle inequality.
\end{proof}

%%%%%%%%%%%%%%%%%%%%%%%%%%%%%%%%%%%%%%%%%%%%%%%%%
\section{Numerical Experiments}\label{sec:numerics}

For the numerical experiments we consider the spatial domain $\cD=(0,1)$ with time interval $\bT=[0,1]$, take $H=U=L^2((0,1))$, and let $Q$ be the Mat\'ern covariance operator from Examples~\ref{ex:matern} and~\ref{ex:matern2}:
\bee
[Q\phi](x):=\int_0^1 \frac{2^{1-\nu}}{\Gamma(\nu)}\big(\sqrt{2\nu}\frac{|x-y|}{\rho}\big)^\nu K_\nu\big(\sqrt{2\nu}\frac{|x-y|}{\rho}\big)\phi(y)dy,\quad \phi\in U,\; x\in(0,1).
\eee
We fix the correlation length to $\rho=1/4$ and vary the smoothness parameter $\nu>0$ throughout our experiments. The eigenpairs $((\eta_k,e_k),k\in\bN)$ of $Q$ may be approximated by solving a discrete eigenvalue problem and interpolation, see \cite[Chapter 4.3]{RW06}.
Provided that $\nu>1/2$, Remark~\ref{rem:ass2} shows that Assumption~\ref{ass2}\ref{item:lingrowthG} is satisfied for $\gg_G<\nu$ and $\gb<\nu/(1+2\nu)$.
We consider GH L\'evy fields, i.e. the one-dimensional processes $(\ell_i,i\in\bN)$ from Eq.~\ref{eq:1d_marginal} are uncorrelated GH L\'evy processes. More importantly, for each $N\in\bN$ the vector-valued process $(\text{GH}_N(t),t\in\bT):=((\ell_1(t),\dots,\ell_N(t)),t\in\bT)$ is a $N$-dimensional GH L\'evy process with parameters $\widehat\gl\in\bR,\widehat\ga>0,\widehat\gd>0,\widehat\vartheta\in\bR^n,\widehat\mu\in\bR^N$ and $\widehat\gG\in\bR^{N\times N}$, where $\widehat\ga^2>\widehat\vartheta\cdot\widehat\gG\widehat\vartheta$ and the matrix $\widehat\gG$ is symmetric, positive definite with unit variance. The characteristic function of $\text{GH}_N$ is given for $u\in\bR^N$ by
\begin{equation*}
	\bE(e^{iu\cdot \text{GH}_N(t)})=e^{iu\cdot\widehat\mu t}
	\left(\frac{\widehat\ga^2-\widehat\vartheta\cdot\widehat\gG\widehat\vartheta}{\widehat\ga^2-(iu+\widehat\vartheta)\cdot\widehat\gG(iu+\vartheta)}\right)^{\widehat\gl t/2}
	\frac{K_{\widehat\gl}(\widehat\gd(\widehat\ga^2-(iu+\widehat\vartheta)\cdot\widehat\gG(iu+\widehat\vartheta))^{1/2})^t}
	{K_{\widehat\gl}(\widehat\gd(\widehat\ga^2-\widehat\vartheta\cdot\widehat\gG\widehat\vartheta)^{1/2})^t}.
\end{equation*}
We achieve a zero-mean process by setting $\widehat\vartheta=\widehat\mu=(0,\dots,0)$. An important class of the GH family are \textit{normal inverse Gaussian} (NIG) processes, where $\widehat\gl=-1/2$.
For more details on multidimensional GH distributions and the simulation of GH L\'evy fields we refer again to \cite{BS18a} and the references therein.
In all subsequent experiments, we use a NIG L\'evy field with $\widehat\ga=10,\,\widehat\gd=1,\,\widehat\vartheta=\widehat\mu=(0,\dots,0)$ and \linebreak $\widehat\gG=\1_N$ for each truncation index $N$.
The choice of NIG fields is motivated by the results from \cite{AKW10}, where the authors pointed out that this class of L\'evy fields is well-suited to fit empirical log-returns in  electricity forward markets.
We are able to simulate multidimensional NIG processes without bias, i.e., Assumption~\ref{ass:noise} holds with $\eps_L=0$.

As stochastic transport problem we consider a slight modification of the energy forward model from \cite{BB14} (to include multiplicative noise) given by
\be\label{eq:forward}
dX(t,x)=\partial_x X(t,x)+\Sigma(X(t,x),x)^2dt +\Sigma(X(t,x),x)dL(t,x),\quad x\in\cD,\, t\in\bT.
\ee
For positive parameters $\ga, \sigma>0$  we use the coefficient function
\bee
\Sigma:H\times\ol \cD\to\bR_+^0,\quad  (X,x)\mapsto \sigma (e^{-\ga x}-e^{-\ga}) X,
\eee
and matching initial/inflow boundary conditions given by
\bee
X_0(x)=e^{-\ga x}+\frac{\sigma^2K_0(\widehat\ga)}{\ga\pi}(1-e^{-\ga x}),\quad X(t,1)=e^{-\ga}.
\eee
Note that Eq.~\eqref{eq:forward} may be transformed to a problem with homogeneous boundary conditions by replacing $\Sigma$ and $X_0$ by
\begin{equation*}
	\Sigma^{hom}(X,x):= \Sigma(X+e^{-\ga},x)\quad\text{and}\quad
	X_0^{hom}(x):= X_0(x)-e^{-\ga},
\end{equation*}
see Remark~\ref{rem:BC}.
We fix the values $\ga=0.5$ and $\sigma=1$ for our experiments.
The coefficients in Eq.~\eqref{eq:forward} are time-independent and the relation $F(t,X)=\Sigma(X(t,\cdot),\cdot)^2=G(t,X)^2$ is imposed to ensure the absence of arbitrage in the market, see \cite{BB14}.
This entails that $F$ is only locally Lipschitz with respect to $X$, while Assumption~\ref{ass2} still holds for $G$. Nevertheless, the quadratic growth of $F$ did not cause any problems in our experiments, and hence we stick to this particular example.
As $F, G$ and $X_0$ vanish near the outflow boundary for $x=1$, but their derivatives do not,  Assumption~\ref{ass2} is satisfied with $\gg_0=\gg_F=3/2-\eps$ and $\gg_G=\min(3/2-\eps,\nu)$ for all $\eps>0$.
Samples of $X$ are given for $\nu=1$ and $\nu=3$ in Figure~\ref{fig:ForwardSample}.
\begin{figure}[h]
	\centering
	\subfigure{\includegraphics[scale=0.36]{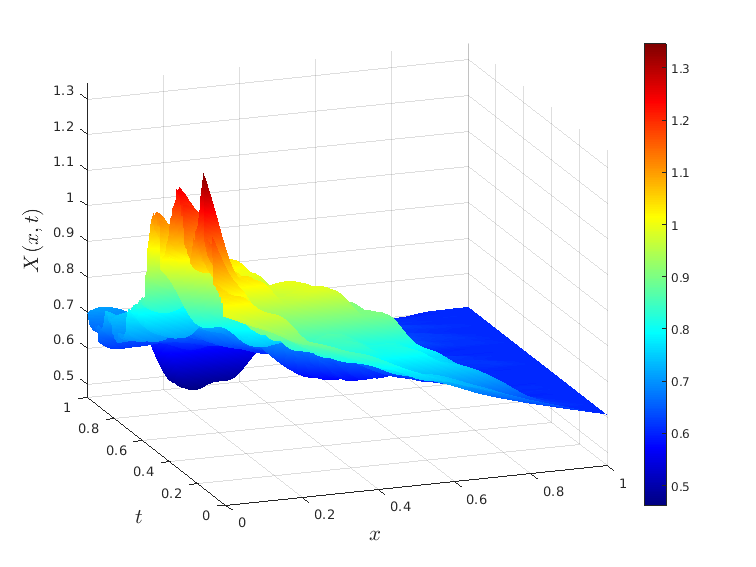}}
	\subfigure{\includegraphics[scale=0.36]{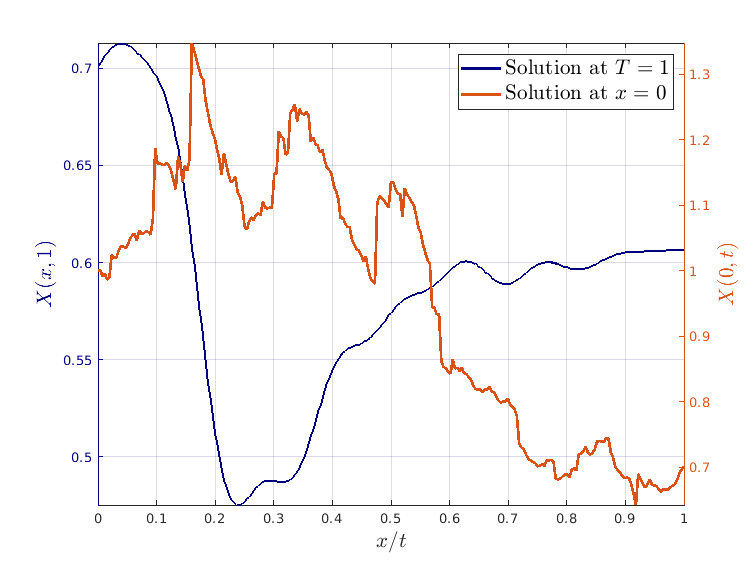}}
	\subfigure{\includegraphics[scale=0.36]{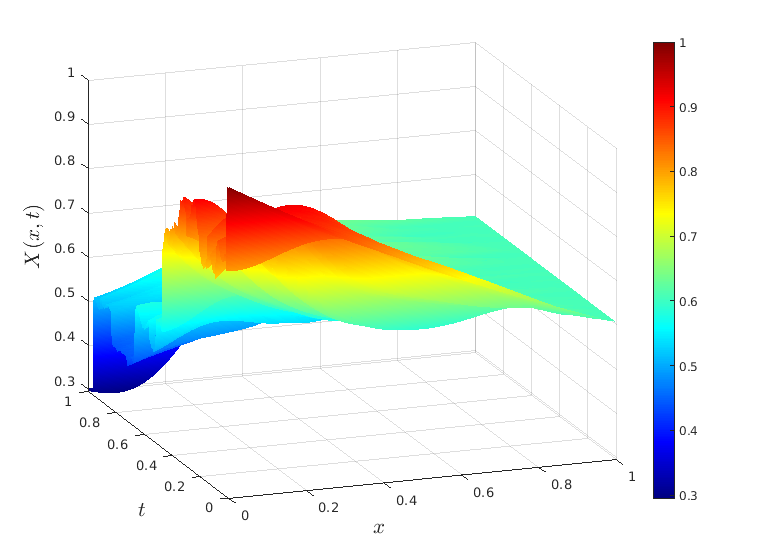}}
	\subfigure{\includegraphics[scale=0.36]{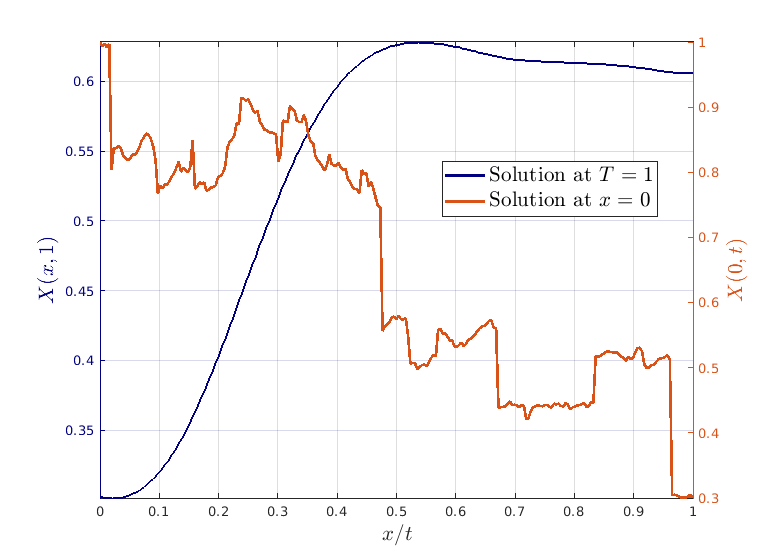}}
	\caption{Left column: samples of the solution to the forward model. Right column: plots of the surface at the outflow boundary $x=1$ (blue) and for $T=1$ (orange). The smoothness parameters of the covariance function are $\nu=1$ in the top row and $\nu=3$ in the bottom row.}
	\label{fig:ForwardSample}
\end{figure}

We use the fully discrete scheme~\eqref{eq:ste_weak_fd} to discretize Eq.~\eqref{eq:forward}. The space $H_h$ consists of the piecewise linear DG functions with respect to an equidistant refinement of $\cD=(0,1)$ with mesh width $h>0$. 
Recalling that $\gg<2$ and $\eps_L=0$, Theorem~\ref{thm:overall_error} predicts for $\gg\ge 1$ an overall error of 
\bee
\bE(\|X(T)-\widetilde X_{h,N}^{(m)}\|_H^2)
\le C \left(h^{2\gg}+\gD t + \sum_{k>N}\eta_k\right).
\eee

We consider $\nu\in\{0.5,1,1.5,2,2.5,3\}$, assume for simplicity that $\gg=\min(3/2,\nu)$, 
and use spatial refinements of $h=2^{-3},\dots,2^{-7}$ for any given $\nu$. 
Based on Remark~\ref{rem:error-balancing}, we determine $\gD t$ and $N$ such that
\be\label{eq:errorequi}
\gD t = \sum_{i>N}\eta_k = \max(h^{2\gg},2^{-20}),
\ee
where the choice $\gD t = 2^{-20}$ only applies in the case $\nu\ge 1.5$ for the refinement $h=2^{-7}$.
We approximate $X(T)$ by a reference solution $X_{ref}(T)$ that is generated with $h_{ref}=2^{-9}$ and $\gD t$ and $N$ according to Eq.~\eqref{eq:errorequi}.
The overall root-mean-squared error (RMSE) from Theorem~\ref{thm:overall_error} is estimated by averaging 200 independent samples of $X_{ref}(T)-\widetilde X^{(m)}_{h,N}$, i.e.,
\bee
\bE(\|X(T)-\widetilde X^{(m)}_{h,N}\|_H^2)\approx \frac{1}{200}\sum_{l=1}^{200}\|(X_{ref}(T)-\widetilde X^{(m)}_{h,N})_l\|_H^2,
\eee
where the subscript $l$ denotes the $l$-th Monte Carlo sample. The same realization of the L\'evy noise $L$ is used for $X_{ref}$ and $\widetilde X^{(m)}_{h,N}$ in any of the 200 samples to estimate the pathwise, strong convergence of the algorithm.
By Eq.~\eqref{eq:errorequi}, we have for $\gg\ge 1$ that 
\begin{equation*}
	\bE(\|X(T)-\widetilde X^{(m)}_{h,N}\|_H^2)\approx C h^{2\gg}
	\quad\text{and}\quad
	\log\left(\bE(\|X(T)-\widetilde X^{(m)}_{h,N}\|_H^2)^{1/2}\right) \approx \gg \log(h) +\log(C).
\end{equation*}
Hence, we perform a linear regression of the estimated log-RMSE on the log-refinement to obtain an empirical estimate of $\gg$ to compare with our theoretical findings.

We display the results for $\nu\in\{0.5,1,1.5,2,2.5,3\}$ and $h=2^{-3},\dots,2^{-7}$ in Figure~\ref{fig:ForwardConvergence}. As expected, a larger value of $\nu$ increases smoothness, and therefore causes a faster error decay with respect to $h$. This effect saturates around $\nu=2$, as the smoothness of the problem is not anymore limited by the noise, but by the "kink" in the solution at the inflow boundary.
Moreover, the estimated empirical convergence rates in the right plot of Figure~\ref{fig:ForwardConvergence} are in line with our findings from Theorem~\ref{thm:DGerror} and the spectral analysis of the Mat\'ern kernel in Example~\ref{ex:matern2}. The convergence rate is $\gg\approx\nu$ until its saturation point at $\nu=2$, where it remains at $\gg\approx 1.5$, since the solution is at most $H^{3/2-\eps}(\cD)$-regular for $\nu>1.5$. Finally, we remark that the discretization scheme also has an error decay with rate $\gg_G\approx 0.5$ for the borderline case $\nu=0.5$ with $\gD t=h$, which is not covered by Theorem~\ref{thm:time_error}.
\begin{figure}[h]
	\centering
	\subfigure{\includegraphics[scale=0.36]{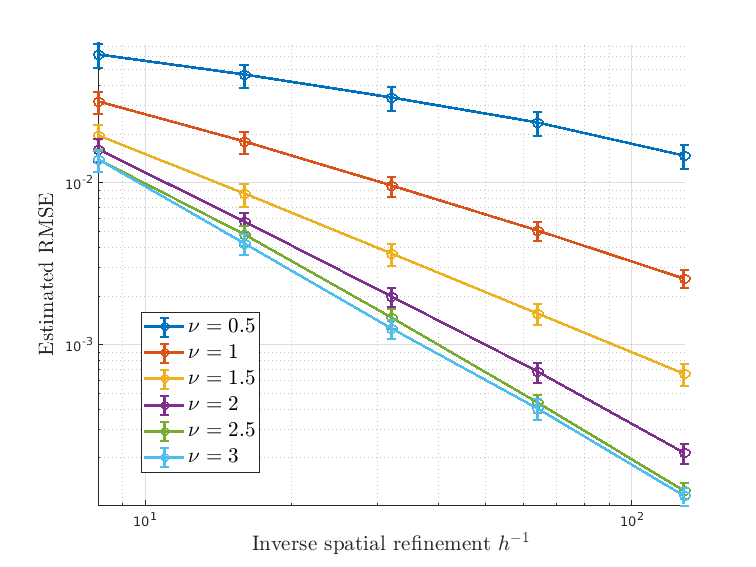}}
	\subfigure{\includegraphics[scale=0.36]{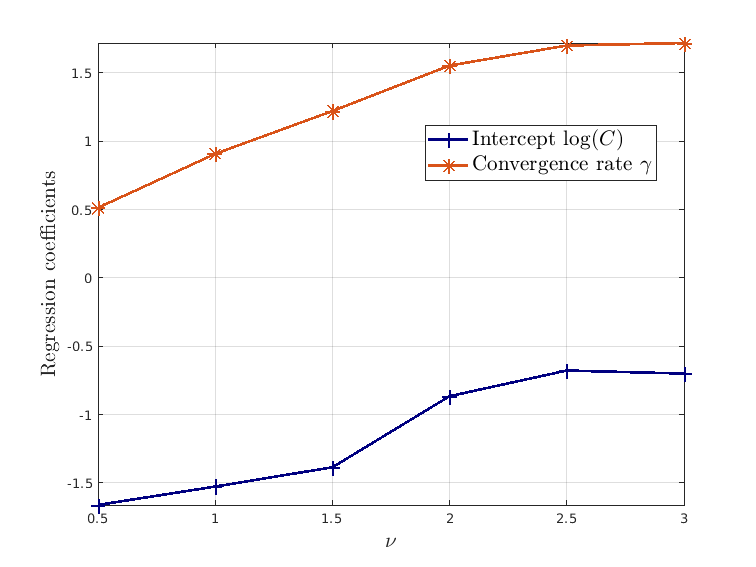}}
	\caption{Left: RMSE vs. inverse spatial refinement $h^{-1}$, the bars on each RMSE curve indicate the 95-\% confidence interval of the estimated error. Right: estimated convergence rates of the Backward Euler -- Petrov-DG scheme for Eq.~\eqref{eq:forward}.}
	\label{fig:ForwardConvergence}
\end{figure}

%%%%%%%%%%%%%%%%%%%%%%%%%%%%%%%%%%%%%%%%%%%%%%%%%%%%%%%%%%%%%%%%%

\section*{Acknowledgements}

This work is partially funded by Deutsche Forschungsgemeinschaft (DFG, German Research Foundation) under Germany's Excellence Strategy - EXC 2075 - 39740016 and it is greatly appreciated. AS was partly funded by ETH Foundations of Data Science (ETH-FDS). We thank Prof. Dr. Christoph Schwab for insightful discussions that led to a significant improvement of the manuscript. 

\addcontentsline{toc}{section}{References}

\small

\bibliographystyle{abbrv}
\bibliography{LevySPDEs}   

%%%%%%%%%%%%%%%%%%%%%%%%%%%%%%%%%%%%%%%%%%%%%%%%%%%%%%%%%%%%%%%%%

\end{document}